\newtheorem{thm}{Theorem}[section]
\newtheorem{cor}[thm]{Corollary}
\newtheorem{lem}[thm]{Lemma}
\newtheorem{prop}[thm]{Proposition}
\newtheorem{prob}[thm]{Problem}
\theoremstyle{definition}
\newtheorem{defn}[thm]{Definition}
\theoremstyle{remark}
\newtheorem{rem}[thm]{Remark}
\newtheorem{ex}[thm]{Example}
\renewcommand{\d }{{\rm d} }
\renewcommand{\H}{\mathcal{H}}
\newcommand{\HG}{\mathcal{H}(G)}
\newcommand{\GG}{\mathcal{G}(G)}
\newcommand{\acts}{\curvearrowright}
\newcommand{\Ga}{\Gamma}
\newcommand{\GwrZ}{G \op{wr} \mathbb{Z}}
\newcommand{\zwrz}{\mathbb{Z}  \op{wr}  \mathbb{Z}}
\newcommand{\op}{\operatorname}
\newcommand{\z}{\mathbb{Z}}
\newcommand{\zn}{\mathbb{Z}_n}
\begin{document}

\title{Hyperbolic structures on wreath products}
\author{S. H Balasubramanya}
\date{}
\maketitle

\begin{abstract} The study of the poset of hyperbolic structures on a group $G$ was initiated in \cite{ABO}. However, this poset is still very far from being understood and several questions remain unanswered. In this paper, we give a complete description of the poset of hyperbolic structures on the lamplighter groups $\z_n \op{wr} \z$ and obtain some partial results about more general wreath products. As a consequence of this result, we answer two open questions regarding quasi-parabolic structures from \cite{ABO}: we give an example of a group $G$ with an uncountable chain of quasi-parabolic structures and prove that the Lamplighter groups $\z_n \op{wr} \z$ all have finitely many quasi-parabolic structures.  \\

\textbf{MSC Subject Classification:} 20F65, 20F67, 20E08.
\end{abstract}



\section{Introduction}
It is common in geometric group theory to study groups via their actions on a metric space. The easiest way to do so is to convert the group $G$ itself into a metric space by fixing a generating set $X$ and endowing $G$ with the corresponding word metric $\d_X$; the group has a natural cobounded action on the associated Cayley graph $\Ga(G, X)$. However, not all generating sets are equally good for this purpose. For example, the Cayley graph corresponding to $X=G$  has diameter 1 and is therefore quasi-isometric to a point. This Cayley graph retains no information about the inherent structure of the group. On the other hand, the Cayley graph in the case when $X$ is finite retains maximum information about the group. In joint work with Carolyn Abbott and Denis Osin in \cite{ABO}, the author focused on formalizing  this notion. One important notion introduced in that paper is the set of \emph{hyperbolic structures} on $G$, denoted $\H (G)$, which consists of equivalence classes of (possibly infinite) generating sets of $G$ such that the corresponding Cayley graph is hyperbolic; two generating sets of $G$ are equivalent if the corresponding word metrics on $G$ are bi-Lipschitz equivalent. These equivalence classes can be ordered according to the amount of information the associated Cayley graph retains about the group (see Section \ref{prelims} for details); this makes $\HG$ a poset.

One important result about $\HG$ follows from Gromov's classification of groups acting on hyperbolic spaces. Using this classification, it is easy to see that $\H(G)$ can be expressed as the following disjoint union: $$\H(G)=\H_e(G)\sqcup \H_{\ell} (G)\sqcup \H_{qp} (G)\sqcup \H_{gt}(G), $$
where the sets of elliptic, lineal, quasi-parabolic, and general type hyperbolic structures on $G$ are denoted $\H_e(G)$, $\H_{\ell} (G)$, $\H_{qp} (G)$, and $\H_{gt}(G)$ respectively (see Section \ref{prelims} for details). 

Out of these, lineal and general-type actions were well-examined in \cite{ABO}, and several interesting examples and results were obtained. Of special interest were the following results: given any $n \in \mathbb{N}$, there exist (distinct) finitely generated groups $G_n$ and $H_n$ such that $|\H_\ell(G_n)| = n$ and $|\H_{gt}(H_n)| = n$. 

However, the understanding of quasi-parabolic structures is far from being complete. It was shown in \cite[Proposition 4.27]{ABO} that $\zwrz$ has an uncountable antichain of quasi-parabolic structures, but little else is known. The authors of \cite{ABO} consequently posed the following two open questions. 

\begin{prob}\label{q1}
Does there exist a group $G$ such that $\H_{qp}(G)$ is non-empty and finite?
\end{prob}
\begin{prob}\label{q2}  Does there exist a group $G$ such that $\H_{qp} (G)$ contains an uncountable chain?
\end{prob}

The answer to both these questions will be obtained as consequences of the following general theorem for wreath products $\GwrZ$ that we will prove in this paper (Theorem \ref{inintro} below). In what follows, $\mathbb{S}_G$ denotes the poset of proper subgroups of $G$ ordered by inclusion. 

\begin{defn} For any group $G$, we will refer to the following poset as $\mathcal{B}(G)$: Two disjoint copies of $\mathbb{S}_G$ which are incomparable .i.e. every element in one copy of $\mathbb{S}_G$ is incomparable to every element in the other copy of $\mathbb{S}_G$. Both copies of $\mathbb{S}_G$ dominate a common element, which in turn dominates one more element. (see Figure \ref{ggenstr}).
\end{defn}

\begin{thm}\label{inintro} Let $G$ be a group. \begin{itemize}[noitemsep,nolistsep] \item[(1)] Then $\mathcal{B}(G) \subset \H(\GwrZ)$. Specifically, the two copies of $\mathbb{S}_G$ correspond to quasi-parabolic structures on $\GwrZ$. The common element that is dominated by the two copies of $\mathbb{S}_G$ corresponds to a lineal structure on $\GwrZ$. In turn, the lineal structure dominates the trivial structure on $\GwrZ$. 
\item[(2)]If $G =\z_n$, then $\mathcal{B}(G) = \H(\z_n \op{wr} \z)$. 
\end{itemize}
\end{thm}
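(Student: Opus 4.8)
The plan is to prove the two parts separately, since (1) is a construction and (2) is a classification. For part (1), I would build the relevant generating sets explicitly. Write $W = \GwrZ = \left(\bigoplus_{i\in\z} G_i\right) \rtimes \z$, where $\z$ acts by shifting coordinates and $t$ denotes a generator of the acting $\z$. Fix a finite generating set $A$ of $G$ (if $G$ is infinite, take $A=G$) sitting inside the $0$-th coordinate $G_0$. The \emph{trivial structure} is the class of $W$ itself. The \emph{lineal structure} should be the class of the generating set $G_0 \cup \{t\}$ enlarged by the whole base group $B=\bigoplus_i G_i$, i.e. $Y_\ell = B \cup \{t\}$: in $\Ga(W, Y_\ell)$ the base group is a single ball, so the Cayley graph is quasi-isometric to a line on which $W$ acts by translation, giving a lineal action whose associated homogeneous quasimorphism is the projection $W\to\z$. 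For the quasi-parabolic structures, for each proper subgroup $H \lneq G$ I would consider the two ``half-line'' generating sets
\[
Y_H^{+} = \Big(\bigoplus_{i\le 0} G_i\Big) \cup H^{(1)} \cup \{t\}, \qquad
Y_H^{-} = \Big(\bigoplus_{i\ge 0} G_i\Big) \cup H^{(-1)} \cup \{t\},
\]
where $H^{(i)}$ denotes the copy of $H$ in the $i$-th coordinate (one needs to be a little careful about exactly which coordinates get all of $G$ and which get only $H$ so that $Y_H^\pm$ is still generating and so that distinct $H$ give non-equivalent structures). The point is that $\Ga(W, Y_H^+)$ should look like a tree-graded / horoball-type space: moving in the positive $t$-direction is ``cheap'' because one can dump lamp configurations for free, while moving in the negative direction forces one to pay, so the action is focal (quasi-parabolic) with a unique fixed point on the boundary. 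I would verify hyperbolicity of these Cayley graphs by exhibiting them (up to quasi-isometry) as the standard model: a horoball-like space over a tree, or by a direct thin-triangles / Morse-type argument, and verify the action is of quasi-parabolic type by computing that the Busemann quasicharacter is the homomorphism $W\to\z$ but the action is not lineal (it has loxodromics and a global fixed point at infinity).

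Next, for part (1), I must verify the \emph{order relations} asserted in the definition of $\mathcal{B}(G)$. Recall $\preceq$ on $\H(W)$ is induced by $[X]\preceq[Y]$ iff $d_Y$ is Lipschitz dominated by $d_X$, i.e. (roughly) $Y$ is coarsely contained in $\langle X\rangle$-balls. So I need: (i) each $[Y_H^\pm]$ dominates $[Y_\ell]$; this holds because $Y_\ell = B\cup\{t\}$ and $Y_H^\pm$ both contain $t$ and ``half'' of $B$, and $B$ is coarsely bounded in $\Ga(W,Y_H^\pm)$... wait, actually the containment goes the correct way precisely because adding all of $B$ only \emph{coarsens} the metric; I'd make this precise via the criterion in Section~\ref{prelims}. (ii) $[Y_\ell]$ dominates the trivial structure $[W]$: immediate, since every structure dominates the trivial one. (iii) the two families $\{[Y_H^+]\}$ and $\{[Y_K^-]\}$ are mutually incomparable: this is where orientation matters — the quasimorphism ``direction'' at infinity is opposite for $+$ and $-$, so neither metric dominates the other; I'd isolate this by looking at the displacement of $t$ versus $t^{-1}$. (iv) within one family, $[Y_H^+]\preceq[Y_K^+]$ iff $H\le K$: the $\supseteq$ direction of subgroups corresponds to making more lamp-elements cheap, hence a coarser (dominated) metric, and conversely if $H\not\le K$ one exhibits an element of $H$ in some far-out coordinate that is ``expensive'' in $Y_K^+$ but ``cheap'' in $Y_H^+$. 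Establishing (iii) and (iv) cleanly — i.e. that the poset embedding is \emph{full}, not just order-preserving — is the main obstacle in part~(1); it requires genuine distance estimates in the Cayley graphs, of the flavor of the $\zwrz$ computations in \cite[Section 4]{ABO}.

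For part (2), where $G = \z_n$, the proper subgroups form the (finite) poset $\mathbb{S}_{\z_n}$ of proper subgroups of $\z_n$, so $\mathcal{B}(\z_n)$ is a finite poset, and I must show \emph{every} hyperbolic structure on $\z_n\op{wr}\z$ appears in the list produced by part~(1). I would stratify by Gromov type. First, $\H_{gt}(\z_n\op{wr}\z)=\varnothing$: the group $\z_n\op{wr}\z$ is amenable (it is solvable — in fact metabelian), and amenable groups admit no general-type action on a hyperbolic space (an acylindrically-hyperbolic-style obstruction, or directly: a general-type action gives a non-elementary quasi-morphism-rich quotient incompatible with amenability), so that stratum is empty. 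Next I handle $\H_e$: the only elliptic structure is the trivial one, because any generating set $X$ with $\Ga(W,X)$ bounded has $W$ of bounded diameter, forcing $[X]=[W]$; more precisely $\H_e(W)$ is always a single point. For $\H_\ell$: a lineal action corresponds to a nontrivial homogeneous quasimorphism, but since $W$ is metabelian its space of homogeneous quasimorphisms equals $\mathrm{Hom}(W,\mathbb{R})$, which is one-dimensional here ($W^{ab}$ has rank $1$), and one checks the only lineal \emph{hyperbolic} structure up to equivalence is the one constructed in part~(1) — here I'd cite/adapt the classification of lineal structures from \cite{ABO} (regular/orientable lineal structures are controlled by the quasimorphism). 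The real content is $\H_{qp}$: I must show a quasi-parabolic structure on $W$ is equivalent to one of the $Y_H^\pm$. A quasi-parabolic action fixes a unique point $\xi$ on the Gromov boundary and its Busemann quasicharacter $\beta\colon W\to\mathbb{R}$ is a nonzero homogeneous quasimorphism, hence (by metabelianity) proportional to the projection $\pi\colon W\to\z$; the sign of the proportionality constant picks out the $+$ or $-$ family. The kernel of $\pi$ is the base group $B=\bigoplus_i\z_n$, and I must analyze which subsets of $B$ can be ``the elliptic/horospherical part'' of a quasi-parabolic structure: a confining-subset / Caprace–Cornulier–Monod–Tessera-style analysis of the $t$-action on $B$ should show the coarsely-invariant ``confining subgroups'' of $B$ under the shift are exactly the $\bigoplus_{i\ge k} H^{(i)} \oplus \bigoplus_{i<k}\z_n$ for $H\le\z_n$ proper (up to finite stuff and up to the coarse equivalence that washes out $k$ and replacing $H$ by its saturation), matching the list $Y_H^\pm$ exactly. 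I expect this confining-subgroups classification to be the hard technical heart of part~(2): it is essentially a finiteness statement about how a loxodromic element of a hyperbolic action can contract a coarsely-invariant subspace, and pinning down that these are the \emph{only} possibilities (no exotic, non-subgroup confining subsets) will require the bulk of the work, again leaning on the distance estimates and on $\z_n$ having only finitely many subgroups.
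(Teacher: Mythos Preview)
Your overall architecture matches the paper's closely: build the quasi-parabolic structures from confining-type subsets of the base group, verify the poset relations by explicit length estimates, and for part~(2) rule out general type by amenability, pin down the lineal structure via the one-dimensional space of homomorphisms to~$\mathbb{R}$, and reduce the quasi-parabolic classification to a classification of confining subsets (exactly as in \cite{Amen}). That is also precisely what the paper does; in particular your identification of the ``hard technical heart'' is correct --- the bulk of the paper is the argument that every strictly confining subset of $A$ for the shift is equivalent to one of the $Q_H$.

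There is, however, a concrete error in your part~(1) construction. Your proposed sets
\[
Y_H^{+} \;=\; \Big(\bigoplus_{i\le 0} G_i\Big)\;\cup\; H^{(1)}\;\cup\;\{t\}
\]
all give the \emph{same} hyperbolic structure, independent of $H$. Indeed $H^{(1)}$ is $Y_{\{e\}}^{+}$--bounded (since $h^{(1)}=t\,h^{(0)}\,t^{-1}$ has length~$3$), so $[Y_H^{+}]=[Y_{\{e\}}^{+}]$ for every proper $H$. The map $H\mapsto[Y_H^{+}]$ is therefore constant, not an embedding of $\mathbb{S}_G$. What actually distinguishes the structures is allowing $H$-entries in \emph{all} coordinates on one side, not just one: the paper takes $Q_H=\{f\in A: f(-i)\in H\ \forall\, i>0\}$ (and the mirror $Q'_H$), and you yourself write down essentially this set later in part~(2) as $\bigoplus_{i\ge k}H^{(i)}\oplus\bigoplus_{i<k}\z_n$. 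With the correct $Q_H$, hyperbolicity and the quasi-parabolic type come for free from the confining--subset criterion of \cite[Proposition~4.6]{Amen}, so there is no need for a direct thin-triangles argument. One small further slip: the map $H\mapsto[S_H]$ is order-\emph{reversing} (larger $H$ means more generators, hence a \emph{smaller} structure), which your verbal explanation gets right but your displayed ``iff $H\le K$'' gets backwards.
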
 

\begin{figure}[H]
	\centering
	\def\svgscale{0.6}
	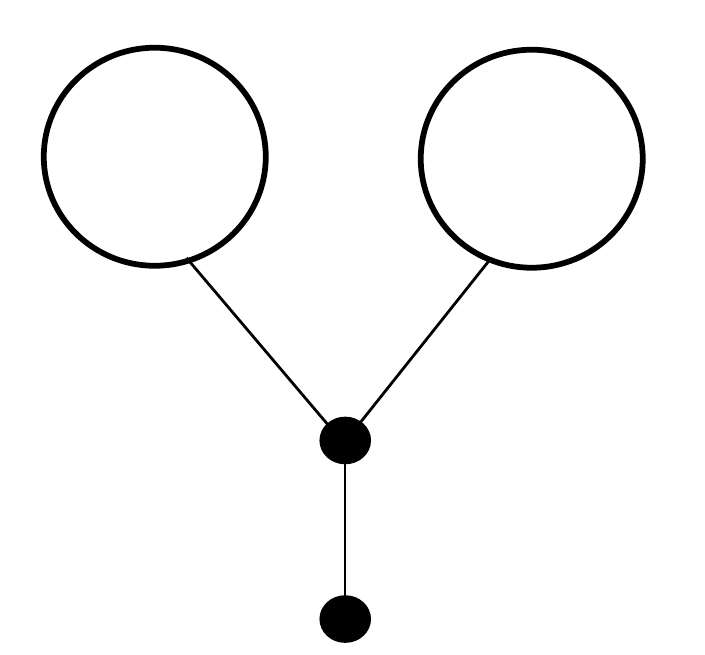
	\caption{$\mathcal{B}(G) \subset \mathcal{H}(G \op{wr} \z)$ }
          \label{ggenstr}
\end{figure} 

The proof of the above theorem is a combination of the description of quasi-parabolic structures obtained (in a different language) in \cite{Amen}, along with elementary (but lengthy) arguments from commutative algebra. Also note that in general, the above equality does not hold. Indeed, the above equality does not even hold for every finite group $G$. The case when $G =\z_2 \times \z_2$ provides a counterexample, as shown in Example \ref{noneqforfin}. 

The answer to Problem \ref{q1} is obtained immediately; the lamplighter groups all have finitely many quasi-parabolic structures. Specifically, the number of quasi-parabolic structures on $\z_n \op{wr} \z$ equals twice the number of proper divisors of $n$. The answer to Problem \ref{q2} is obtained by applying Theorem \ref{inintro} (1) to the group $\mathbb{F}_{2} \op{wr} \z$.
Indeed, it is easy to show that the poset of subsets of $\mathbb{N}$ ordered by inclusion, denoted $\mathbb{P}(\mathbb{N})$, embeds into $\mathbb{S}_{\mathbb{F}_{2}}$. We thus obtain the following corollary.

\begin{cor} \label{uncqp} $\H_{qp}(\mathbb{F}_2 \op{wr} \z)$ contains an isomorphic copy of $\mathbb{P}(\mathbb{N})$. In particular, $\H_{qp}(\mathbb{F}_2 \op{wr} \z)$ contains an uncountable chain. 
\end{cor}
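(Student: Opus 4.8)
The plan is to derive Corollary~\ref{uncqp} directly from part (1) of Theorem~\ref{inintro}, so the only real work is to understand the poset $\mathbb{S}_{\mathbb{F}_2}$ of proper subgroups of the free group of rank $2$. By Theorem~\ref{inintro}(1), $\mathcal{B}(\mathbb{F}_2) \subset \H(\mathbb{F}_2 \op{wr} \z)$, and the two copies of $\mathbb{S}_{\mathbb{F}_2}$ inside $\mathcal{B}(\mathbb{F}_2)$ consist entirely of quasi-parabolic structures; hence $\mathbb{S}_{\mathbb{F}_2}$ embeds (as a poset) into $\H_{qp}(\mathbb{F}_2 \op{wr} \z)$. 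So it suffices to exhibit an order-embedding of $\mathbb{P}(\mathbb{N})$ into $\mathbb{S}_{\mathbb{F}_2}$; composing the two embeddings proves the first sentence of the corollary, and the ``in particular'' clause follows since $\mathbb{P}(\mathbb{N})$ contains an uncountable chain (for instance, index a countable dense linear order such as $\mathbb{Q}$ by $\mathbb{N}$ and take the Dedekind cuts, or use the standard fact that $\mathbb{P}(\mathbb{N})$ has a chain of cardinality $2^{\aleph_0}$).

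First I would build the chain and the embedding using free generators. Fix a free basis $\{a, b\}$ of $\mathbb{F}_2$. The subgroup generated by the countably infinite set $\{\, b^{-n} a b^{n} : n \in \mathbb{N} \,\}$ is free of countably infinite rank, with this set as a basis (this is the standard construction showing $F_\infty \le F_2$; it follows from the Nielsen--Schreier theorem, or one can verify directly that no nontrivial reduced word in the $b^{-n}ab^n$ collapses). Call the $n$-th generator $c_n := b^{-n} a b^{n}$. Then for a subset $S \subseteq \mathbb{N}$, define $K_S := \langle c_n : n \in S \rangle \le \mathbb{F}_2$. Because $\{c_n\}_{n\in\mathbb{N}}$ is a free basis, we have $K_S \le K_T$ if and only if $S \subseteq T$: the forward direction is the nontrivial one and uses that in a free group a basis element $c_n$ lies in the subgroup generated by a subset of the other basis elements only if $n$ is in that subset — again a consequence of freeness (uniqueness of reduced normal forms, or the retraction of $\mathbb{F}_\infty$ onto the free factor $\langle c_m : m \in T\rangle$ killing the others). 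Each $K_S$ with $S \ne \mathbb{N}$ is a proper subgroup, and even $K_{\mathbb{N}}$ is proper in $\mathbb{F}_2$ (it misses $b$), so the whole assignment $S \mapsto K_S$ lands in $\mathbb{S}_{\mathbb{F}_2}$. This gives the required order-embedding $\mathbb{P}(\mathbb{N}) \hookrightarrow \mathbb{S}_{\mathbb{F}_2}$.

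The main (and only genuine) obstacle is the routine but essential verification that $\{c_n\}$ is a free basis and that subgroup containment among the $K_S$ is detected exactly by inclusion of index sets; both reduce to the normal-form/retraction properties of free groups and are standard, so I would state them with a one-line justification rather than a full argument. Having fixed the embedding, I would assemble the corollary: $\mathbb{P}(\mathbb{N}) \hookrightarrow \mathbb{S}_{\mathbb{F}_2} \hookrightarrow \H_{qp}(\mathbb{F}_2 \op{wr} \z)$ as posets, and then recall that $\mathbb{P}(\mathbb{N})$ contains an uncountable totally ordered subset, which transports along the embedding to an uncountable chain in $\H_{qp}(\mathbb{F}_2 \op{wr} \z)$, answering Problem~\ref{q2}. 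No further properties of wreath products or of the construction of $\mathcal{B}(G)$ are needed beyond the black-box statement of Theorem~\ref{inintro}(1).
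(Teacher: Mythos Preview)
Your proposal is correct and follows essentially the same route as the paper: embed $\mathbb{P}(\mathbb{N})$ into $\mathbb{S}_{\mathbb{F}_2}$ via a copy of $\mathbb{F}_\infty$ sitting inside $\mathbb{F}_2$, then invoke Theorem~\ref{inintro}(1). The only cosmetic difference is that the paper first embeds $\mathbb{P}(\mathbb{N})$ into $\mathbb{S}_{\mathbb{F}_\infty}$ using the even-indexed generators $\langle a_{2i}:i\in I\rangle$ and then cites $\mathbb{F}_\infty<\mathbb{F}_2$ abstractly, whereas you write down the explicit basis $c_n=b^{-n}ab^n$; your version is arguably tidier, and you are more careful than the paper in verifying the ``if and only if'' needed for a genuine order-embedding. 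One small point worth flagging (which the paper also elides): by Lemma~\ref{wdor} the map $\mathbb{S}_G\to\H_{qp}(G\operatorname{wr}\z)$ is order-\emph{reversing}, not order-preserving, but this is harmless here since $\mathbb{P}(\mathbb{N})$ is self-dual via complementation and chains go to chains either way.
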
 

Although this paper does provide some answers, there are others that remain unanswered. One question relates to the fact that the Lamplighter groups all have an even number of quasi-parabolic structures; these are currently the only examples where groups have a finite number of quasi-parabolic structures. We may thus ask if there exists a group $G$ such that $|\H_{qp}(G)|$ is odd. More generally, we can also whether there exist groups $G_n$ such that $|\H_{qp}(G_n)| = n$, for every $n \geq 1$. One can also ask which conditions the group $G$ would need to satisfy in order to ensure that $\mathcal{B}(G) = \H(\GwrZ)$.

\textbf{Organization of the paper:} The next section will list the required preliminary information. Section \ref{genthm} focuses on proving the first part of Theorem \ref{inintro}, from which Corollary \ref{uncqp} will follow immediately. Section \ref{lamp} discusses the structure of the Lamplighter groups and proves the second part of Theorem \ref{inintro} by giving a complete description of $\H(\z_n \op{wr} \z)$. 

\textbf{Acknowledgments:} I am very grateful to P.E. Caprace and Y. de Cornulier for aiding my understanding of \cite{Amen} by answering my questions. I also sincerely thank Gregory Kelsey for helpful conversations about the Lamplighter groups. As always, my heartfelt thanks to my advisor, Denis Osin, for his continued support and advice.

\section{Preliminaries}\label{prelims}

\paragraph{Comparing generating sets and group actions.} We begin by recalling some standard terminology and definitions. Throughout this paper, all group actions on metric spaces are assumed to be isometric. Given a metric space $S$, we denote by $\d_S$ the distance function on $S$ unless another notation is introduced explicitly. Our standard notation for an action of a group $G$ on a metric space $S$ is $G\curvearrowright S$. Given a point $s\in S$ or a subset $X\subseteq S$ and an element $g\in G$, we denote by $gs$ (respectively, $gX$) the image of $s$ (respectively $X$) under the action of $g$. Given a group $G \curvearrowright S$ and some $s\in S$, $Gs$ denotes the $G$-orbit of $s$.

\begin{defn} An action of a group $G$ on a metric space $S$ is said to be

\begin{enumerate}
\item[(i)] \emph{proper} is for every bounded subset $B\subseteq S$ the set $\{ g\in G\mid gB\cap B\ne \emptyset\}$ is finite;

\item[(ii)] \emph{cobounded} if there exists a bounded subset $B\subseteq S$  such that $S=\displaystyle \bigcup_{g\in G} gB$;

\item[(iii)] \emph{geometric} if it is proper and cobounded.
\end{enumerate}
\end{defn} 

\begin{defn} A map $f\colon R\to S$ between two metric spaces $R$ and $S$ is a \emph{quasi-isometric embedding} if there is a constant $C$ such that for all $x,y\in R$ we have
$$
\frac1C\d_R(x,y)-C\le \d_S(f(x),f(y))\le C\d_R(x,y)+C.
$$
In addition, if $S$ is contained in the $C$--neighborhood of $f(R)$, $f$ is called a \emph{quasi-isometry}.  Two metric spaces $R$ and $S$ are \emph{quasi-isometric} if there is a quasi-isometry $R\to S$. It is well-known that quasi-isometry of metric spaces is an equivalence relation.
\end{defn} 

\begin{defn} If a group $G$ acts on metric spaces $R$ and $S$, a map $f\colon R\to S$ is called \textit{$G$-equivariant} if $f(gr) = gf(r)$ for every $g \in G$ and every $r \in R$. 
\end{defn} 

We now recall the notion of comparing two generating sets of a group, introduced in \cite{ABO}.

\begin{defn}\label{def-GG}\cite[Definition 1.1]{ABO}
Let $X$, $Y$ be two generating sets of a group $G$. We say that $X$ is \emph{dominated} by $Y$, written $X\preceq Y$, if the identity map on $G$ induces a Lipschitz map between metric spaces $(G, \d_Y)\to (G, \d_X)$. This is equivalent to the requirement that $\sup_{y\in Y}|y|_X<\infty$, where $|\cdot|_X=\d_X(1, \cdot)$ denotes the word length with respect to $X$.  It is easy to see that $\preceq$ is a preorder on the set of generating sets of $G$ and therefore it induces an equivalence relation in the standard way:
$$
X\sim Y \;\; \Leftrightarrow \;\; X\preceq Y \; {\rm and}\; Y\preceq X.
$$
This is equivalent to the condition that the Cayley graphs $\Ga(G,X)$ and $\Ga(G, Y)$ are $G$-equivariantly quasi-isometric. We denote by $[X]$ the equivalence class of a generating set $X$, and by $\GG$ the set of all equivalence classes of generating sets of $G$. The preorder $\preceq$ induces an order relation $\preccurlyeq $ on $\GG$ by the rule
$$
[X]\preccurlyeq [Y] \;\; \Leftrightarrow \;\; X\preceq Y.
$$
\end{defn}

For example, all finite generating sets of a finitely generated group are equivalent and the corresponding equivalence class is the largest element of $\GG$. For every group $G$, $[G]$ is the smallest element of $\GG$. Note also that our order is ``inclusion reversing": if $X$ and $Y$ are generating sets of $G$ such that $X\subseteq Y$, then $Y\preceq X$.

To define a hyperbolic structure on a group, we first recall the definition of a hyperbolic space. In this paper we employ the definition of hyperbolicity via the Rips condition. 

\begin{defn} A metric space $S$ is called \emph{$\delta$-hyperbolic} if it is geodesic and for any geodesic triangle $\Delta $ in $S$, each side of $\Delta $ is contained in the union of the closed $\delta$-neighborhoods of the other two sides.
\end{defn} 

\begin{defn}\cite[Definition 1.2]{ABO} 
A \emph{hyperbolic structure} on $G$ is an equivalence class $[X]\in \GG$ such that $\Gamma (G,X)$ is hyperbolic. Since hyperbolicity of a space is a quasi-isometry invariant; this definition is independent of the choice of a particular representative in the equivalence class $[X]$. We denote the set of hyperbolic structures by $\H (G)$ and endow it with the order induced from $\GG$.  \end{defn}

Alternatively, $\HG$ can be described in terms of general cobounded actions of the group $G$ on hyperbolic spaces by utilizing the \textit{Svarc- Milnor map}, which  allows us to interchangeably work with generating sets or cobounded group actions. However, knowledge of this notion is not essential in this paper and so we refer the reader to \cite[Section 3]{ABO} for details. 

\paragraph{General classification of hyperbolic structures.} We now recall some standard facts about groups acting on hyperbolic spaces. For details the reader is referred to \cite{Gro}. Given a hyperbolic space $S$, we denote by $\partial S$ its Gromov boundary. In general, $S$ is not assumed to be proper. Thus the boundary is defined as the set of equivalence classes of sequences convergent at infinity. If $G$ is a group acting on a hyperbolic space $S$, then by $\Lambda (G)$ we denote the set of limit points of $G$ on $\partial S$. That is, $$\Lambda (G)=\partial S\cap \overline{Gs},$$ where $\overline{Gs}$ denotes the closure of a $G$-orbit in $S\cup \partial S$; it is easy to show that this definition is independent of the choice of $s\in S$. The action of $G$ is called \emph{elementary} if $|\Lambda (G)|\le 2$ and \emph{non-elementary} otherwise. The action of $G$ on $S$ naturally extends to a continuous action of $G$ on $\partial S$.

\begin{defn} Given an action of the group $G$ on a hyperbolic space $S$, an element $g\in G$ is called
\begin{enumerate}
\item[(i)] \emph{elliptic} if $\langle g\rangle $ has bounded orbits;
\item[(ii)] \emph{loxodromic} if the map $n \mapsto g^ns ; n \in \z$ is a quasi-isometric embedding for some (equivalently any) $s \in S$
\item[(iii)] \emph{parabolic} otherwise
\end{enumerate} \end{defn}

Every loxodromic element $g\in G$ has exactly $2$ fixed points $g{^{\pm\infty}}$ on $\partial S$, where $g{^{+\infty}}$ (respectively, $g{^{-\infty}}$) is the limit of the sequence $(g{^{n}}s)_{n\in \mathbb N}$ (respectively, $(g{^{-n}}s)_{n\in \mathbb N}$) for any fixed $s\in S$. We clearly have $\Lambda (\langle g\rangle) =\{ g{^{\pm \infty}}\}$.

As a consequence of the Gromov's standard classification of groups acting on hyperbolic spaces and the fact that parabolic actions cannot be cobounded (see \cite[Section 8.2]{Gro}; also \cite{H} for complete proofs in a more general context and some results from \cite[Propositions 3.1 and 3.2]{Amen}), we proved the following classification of hyperbolic structures in \cite{ABO}.
\begin{thm}\label{main00}\cite[Theorem 4.6]{ABO}
For every group $G$, $$\H(G)=\H_e(G)\sqcup \H_{\ell} (G)\sqcup \H_{qp} (G)\sqcup \H_{gt}(G)$$ 
where the sets of elliptic, lineal, quasi-parabolic, and general type hyperbolic structures on $G$ are denoted by $\H_e(G)$, $\H_{\ell} (G)$, $\H_{qp} (G)$, and $\H_{gt}(G)$ respectively. 
\end{thm}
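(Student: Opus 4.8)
The plan is to pass from equivalence classes of generating sets to cobounded isometric actions on hyperbolic spaces via Cayley graphs, apply Gromov's classification of such actions, discard the one class that coboundedness rules out, and then check that the remaining four classes are pairwise disjoint and that membership in each depends only on the equivalence class $[X]$. This is essentially the argument behind \cite[Theorem 4.6]{ABO}.

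First I would fix $[X]\in\mathcal{H}(G)$ and consider the left action of $G$ on the hyperbolic space $\Gamma(G,X)$: it is isometric and cobounded (it is even transitive on vertices). By Gromov's classification of isometric actions on hyperbolic spaces (\cite[Section 8.2]{Gro}; see also \cite{H} and \cite[Propositions 3.1 and 3.2]{Amen}), such an action is of exactly one of five types, according to the size of the limit set $\Lambda(G)\subseteq\partial\Gamma(G,X)$ and whether $G$ fixes a point of $\partial\Gamma(G,X)$: \emph{elliptic} if $\Lambda(G)=\emptyset$ (equivalently, bounded orbits); \emph{parabolic} if $|\Lambda(G)|=1$; \emph{lineal} if $|\Lambda(G)|=2$; \emph{quasi-parabolic} if $|\Lambda(G)|=\infty$ and $G$ fixes a (necessarily unique) point of $\partial\Gamma(G,X)$; and \emph{general type} if $|\Lambda(G)|=\infty$ and $G$ fixes no point of $\partial\Gamma(G,X)$. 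The content being invoked here is precisely that no intermediate case $3\le|\Lambda(G)|<\infty$ occurs and that the case $|\Lambda(G)|=\infty$ splits cleanly into the last two possibilities; this rests on the North--South dynamics of loxodromic isometries and a ping-pong argument. These types single out subsets of $\mathcal{H}(G)$, the last four of which are $\mathcal{H}_e(G)$, $\mathcal{H}_\ell(G)$, $\mathcal{H}_{qp}(G)$, and $\mathcal{H}_{gt}(G)$.

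The one genuinely substantive step is to show that the parabolic class contributes nothing, i.e.\ that a cobounded isometric action on a hyperbolic space is never parabolic. The idea: if $|\Lambda(G)|=1$, say $\Lambda(G)=\{\xi\}$, then $G$ fixes $\xi$ and contains no loxodromic element (a loxodromic would put two points in $\Lambda(G)$), so a Busemann function at $\xi$ is coarsely $G$-invariant on each orbit; consequently every $G$-orbit lies in a bounded neighborhood of a single horosphere centered at $\xi$, which for an unbounded hyperbolic space cannot be coarsely dense, contradicting coboundedness — and if $\Gamma(G,X)$ is bounded, the action is elliptic rather than parabolic. I would cite this fact (\cite[Section 8.2]{Gro}, \cite{H}) rather than reproduce it. Hence $\mathcal{H}(G)=\mathcal{H}_e(G)\cup\mathcal{H}_\ell(G)\cup\mathcal{H}_{qp}(G)\cup\mathcal{H}_{gt}(G)$.

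Finally, two routine verifications complete the argument. Disjointness is immediate: the four surviving types are distinguished by the value of $|\Lambda(G)|$ ($0$, $2$, or $\infty$) together with, in the last case, whether or not $G$ fixes a point of the boundary, and these descriptions are mutually exclusive. For well-definedness on equivalence classes, if $X\sim Y$ then $\Gamma(G,X)$ and $\Gamma(G,Y)$ are $G$-equivariantly quasi-isometric (Definition \ref{def-GG}); a $G$-equivariant quasi-isometry between hyperbolic spaces induces a $G$-equivariant homeomorphism of Gromov boundaries, hence preserves $|\Lambda(G)|$ and the existence of a global boundary fixed point, so the type of the action depends only on $[X]$. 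This upgrades the union above to the asserted partition $\mathcal{H}(G)=\mathcal{H}_e(G)\sqcup\mathcal{H}_\ell(G)\sqcup\mathcal{H}_{qp}(G)\sqcup\mathcal{H}_{gt}(G)$ of the poset $\mathcal{H}(G)$. The main obstacle is the third step — excluding parabolic cobounded actions; the rest is Gromov's classification together with quasi-isometry invariance of the Gromov boundary.
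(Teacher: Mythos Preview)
Your proposal is correct and matches the paper's treatment: the paper does not give an independent proof of this statement but cites it from \cite{ABO}, indicating only that it follows from Gromov's classification of actions on hyperbolic spaces together with the fact that parabolic actions are never cobounded (with references to \cite{Gro}, \cite{H}, and \cite{Amen}). Your argument fleshes out exactly this outline, including the well-definedness and disjointness checks.
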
 

\paragraph{The Busemann pseudocharacter.} A map $q\colon G\to \mathbb R$ is a \emph{quasi-character} if there exists a constant $D$ such that $$|q(gh)-q(g)-q(h)|\le D$$ for all $g,h\in G$. We say that $q$ has \emph{defect at most $D$}. If, in addition, the restriction of $q$ to every cyclic subgroup of $G$ is a homomorphism, $q$ is called a \emph{pseudocharacter}. Every quasi-character $q$ gives rise to a pseudocharacter $p$ defined by
$$
p(g)=\lim_{n\to \infty} \frac{q(g{^{n}})}n
$$
(the limit always exists); $p$ is called the \emph{homogenization of $q$.} It is straightforward to check that
$$
 |p(g) -q(g)|\le D
$$
for all $g\in G$ if $q$ has defect at most $D$.

Given any action of a group on a hyperbolic space fixing a point on the boundary, one can associate the so-called \emph{Busemann pseudocharacter}. We briefly recall the construction and necessary properties here and refer to \cite[Sec. 7.5.D]{Gro} and \cite[Sec. 4.1]{Man} for more details.

\begin{defn}\label{Bpc}
Let $G$ be a group acting on a hyperbolic space $S$ and fixing a point $\xi\in \partial S$.
Fix any $s\in S$ and let ${\bf x}=(x_i)$ be any sequence of points of $S$ converging to $\xi$. Then  the function $q_{\bf x}\colon G\to \mathbb R$ defined by
$$
q_{\bf x}(g)=\limsup\limits_{n\to \infty}(\d_S (gs, x_n)-\d_S(s, x_n))
$$
is a quasi-character. Its homogenization $p_{\bf x}$ is called the \emph{Busemann pseudocharacter}. It is known that this definition is independent of the choice of $\bf x$ (see \cite[Lemma 4.6]{Man}) and thus we can drop the subscript in $p_{\bf x}$. It is straightforward to verify that $g\in G$ acts loxodromically on $S$ if and only if $p(g)\ne 0$; in particular, $p$ is non-zero whenever $G \curvearrowright S$ is quasi-parabolic.
\end{defn}

\begin{defn} A quasi-parabolic action of a group on a hyperbolic space is called \textit{regular} (or \textit{regular focal}) if the associated Busemann pseudocharacter is a homomorphism. 
\end{defn}

\section{Quasi-parabolic structures on $G \op{wr} \mathbb{Z}$}\label{genthm} 

To prove the first part of Theorem \ref{inintro}, we will first introduce some important notions related to wreath products. The following visualization of some elements from $\GwrZ$ will be very useful in making several arguments in this section: Let $A = \displaystyle \bigoplus_{\mathbb{Z}} G$ denote the base of the wreath product. We naturally think of elements of $A$ as functions from $\mathbb{Z}$ to $G$ with finite support. i.e. only finitely many values in the image can be distinct from $e \in G$. This is a group under co-ordinate wise group operation, which we denote by $+$.

The following notions are taken from \cite{Amen}. These will serve as important tools in proving Theorem \ref{inintro}. 

\begin{defn}\cite[Section 4]{Amen}\label{genconfine} Let $H$ be a group and $Q$ be a subset of $H$, and let $\alpha$ be an automorphism of $H$. We say that the action of $\alpha$ is \textit{(strictly) confining $H$ into $Q$} if it satisfies the following conditions$\colon$ 
\begin{itemize}
\item[(a)] $\alpha(Q)$ is (strictly) contained in $Q$. 
\item[(b)] $H = \displaystyle \cup_{n \geq 0} \hspace{5pt} \alpha^{-n}(Q)$
\item[(c)] $\alpha^{n_0}(Q.Q) \subset Q$ for some non-negative integer $n_0$. 
\end{itemize}
\end{defn}

\begin{prop}\cite[Proposition 4.6]{Amen}\label{niceprop} Let $A$ be a group and let $\alpha$ be an automorphism of $A$ which confines $A$
into some subset $Q \subset A$. Let $S = \{Q, \alpha^{\pm 1} \}$. Then the group $G = A \rtimes \langle \alpha \rangle$  is Gromov hyperbolic with respect to the left-invariant word metric associated to the generating set $S$. If the inclusion $\alpha(Q) \subset Q$ is strict, then it is regular focal.
\end{prop}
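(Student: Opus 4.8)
The plan is to cite \cite[Proposition~4.6]{Amen} as the main engine and then translate its conclusion into the language of this paper. The cited result is stated in terms of a group action on a graph (or a length space), so the work here is really the bookkeeping of matching up generating sets with Cayley graphs. First I would recall the precise statement from \cite{Amen}: under the confining hypothesis, the Cayley graph of $G = A \rtimes \langle \alpha \rangle$ with respect to the generating set $S = \{Q, \alpha^{\pm 1}\}$ is Gromov hyperbolic, and the natural action of $G$ on it is quasi-parabolic (focal), fixing the boundary point determined by the $\langle \alpha \rangle$-direction. Since hyperbolicity of $\Gamma(G,S)$ and the quasi-parabolicity of the associated action are exactly the content of the proposition, little remains except to unwind notation: $S$ does generate $G$ because condition~(b) of Definition~\ref{genconfine} says every element of $A$ lies in some $\alpha^{-n}(Q)$, hence is a product of conjugates of elements of $Q$ by powers of $\alpha$, and $\alpha^{\pm 1}$ generate the $\langle \alpha \rangle$-factor.

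Next I would address the regularity claim when $\alpha(Q) \subsetneq Q$ is strict. The Busemann pseudocharacter $p$ associated to the quasi-parabolic action (Definition~\ref{Bpc}) is, up to bounded error, the projection $G \to \langle \alpha \rangle \cong \mathbb{Z}$ composed with the map $\alpha^k \mapsto k$; this is because moving toward the fixed boundary point corresponds to multiplying by $\alpha$, and the displacement $\d_S(gs, x_n) - \d_S(s, x_n)$ stabilizes to the $\mathbb{Z}$-coordinate of $g$. Being (a bounded perturbation of) an honest homomorphism $G \to \mathbb{Z}$, its homogenization is that homomorphism itself, so the action is regular focal. Here the strict inclusion is what rules out the degenerate case where the action fails to have the right unbounded behavior; I would point out that \cite[Proposition~4.6]{Amen} already records this implication, so again the task is citation plus translation rather than fresh argument.

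The main obstacle — to the extent there is one — is ensuring the dictionary between \cite{Amen}'s framework and the poset-of-hyperbolic-structures framework of \cite{ABO} is airtight: in particular that the generating set $S$, which involves the possibly-infinite set $Q$, yields a well-defined equivalence class $[S] \in \mathcal{H}(G)$, and that the action of $G$ on $\Gamma(G,S)$ by left multiplication is the cobounded action whose type (quasi-parabolic, regular focal) is being asserted. This is routine given the \v{S}varc--Milnor correspondence recalled after the definition of hyperbolic structure, so I would state it briefly and refer to \cite[Section~3]{ABO}. I do not expect to need any new estimates beyond what Definition~\ref{genconfine} supplies.
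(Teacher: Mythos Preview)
Your approach is essentially the same as the paper's: cite \cite[Proposition~4.6]{Amen} for hyperbolicity and the focal property, then observe that the Busemann pseudocharacter coincides with the projection $w\alpha^k \mapsto k$ and is therefore a homomorphism. One correction: you write that \cite[Proposition~4.6]{Amen} ``already records'' the regularity implication, but the paper explicitly notes that the cited proposition does \emph{not} assert regularity --- the argument via the projection map is an addition here (attributed to de~Cornulier), not a citation. Since you supply that argument yourself anyway, this is a misattribution rather than a mathematical gap.
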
 

Although the conclusion of the proposition in \cite{Amen} does not state that we get a regular action, it is not too hard to see that this is indeed the case. We include a brief outline (provided by Y. de Cornulier) here: It is clear that $\alpha$ is a loxodromic element with respect to the action of $G$ of $\Ga(G, S)$. If $h$ is the horokernel associated to the sequence $(\alpha^n)$, then $h(1, wa^k) - k$ can be bounded independently of $w \in A, k \in \z$. From this it follows that the associated Buseman pseudocharacater is equal to the map $w\alpha^k \mapsto k$. 

\begin{rem}\label{wegetlin} If the action is confining but not strictly confining .i.e. $\alpha(Q) =Q$, then the above theorem still holds with the difference that $(G, d_S)$ is quasi-isometric to a line; the action is thus lineal. Indeed, if $\alpha(Q) =Q$, then $A =Q$ is bounded. The above theorem implies that when the action is strictly confining, then  $[S] \in \H_{qp}(G)$. 
\end{rem}

To prove Theorem \ref{inintro} (1), we will describe subsets $Q \subset A$ associated to each proper subgroup $H < G$, such that the action of either $t$ or $t^{-1}$ is strictly confining $A$ into $Q$. Note that the action is by conjugation and that we read our elements from left to right. Our standard notation will be that $t.b = tbt^{-1}; t^{-1}.b = t^{-1}bt$. We now restate the conditions of Definition \ref{genconfine} for the specific case of the wreath product. For this, we will need the following definition. Recall that $A$ is the set of functions from $\z$ to $G$ with finite support. 

\begin{defn}[Multiplying a function by $t$ or $t^{-1}$]
Let $\mathbb{Z} = \langle t \rangle$ and $f \in A$ be a function. Then $tf \in A$ is the function given by $$tf(i+1) = f(i), \hspace{5pt} \forall i \in \z.$$ The function $t^{-1}f$ is defined similarly. Observe that since $f$ has finite support, so do $tf$ and $t^{-1}f$. Indeed, $tf$  and $t^{-1}f$ can be thought of as the ``right shift'' and ``left shift'' of $f$ respectively. By convention, multiplication by $t$ or $t^{-1}$ is commutative. 
\end{defn} 

\begin{defn}\label{gennewconfine} Let $\z = \langle t \rangle$ and $Q \subset A$. The action of $t$ is (strictly) confining $A$ into $Q$ if 
\begin{itemize}
\item[(a)] $tQ$ is (strictly) contained in $ Q$  
\item[(b)] For every $ f \in A$, there exists $n \geq 0$ such that $t^{n}f \in Q$
\item[(c)] There exists a constant integer $n_0 \geq 0$ such that $t^{n_0}(Q + Q) \subset Q$.
\end{itemize} 
\end{defn}

We will now prove the first part of Theorem \ref{inintro} in a series of smaller results. We start by showing how to create two quasi-parabolic structures given a proper subgroup of $G$. Recall that $\mathbb{S}_G$ is the poset of proper subgroups of $G$, ordered by inclusion. 

\begin{lem}\label{1diff} Let $H \in \mathbb{S}_G$. Then the action of $t$ is strictly confining $A$ into the set $Q_H \subset A$ given by $$Q_H = \{f \in A \mid f(-i) \in H \hspace{5pt} \forall \hspace{2pt} i > 0\}.$$
\end{lem}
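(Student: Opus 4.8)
The plan is to verify directly the three conditions of Definition \ref{gennewconfine} for the set $Q_H$. Throughout, recall that $tf$ is the right shift, so $(tf)(i) = f(i-1)$, and hence $(tf)(-i) = f(-i-1)$ for $i > 0$. The set $Q_H$ consists of those finitely supported functions whose values at all strictly negative positions lie in the subgroup $H$ (with no restriction at positions $\geq 0$).

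For condition (a), I would show $tQ_H \subseteq Q_H$ and that the inclusion is strict. If $f \in Q_H$, then for $i > 0$ we have $(tf)(-i) = f(-i-1) \in H$ since $-i-1 < 0$; so $tf \in Q_H$. Strictness: pick $g \in G \setminus H$ (possible since $H$ is proper) and let $f$ be the function equal to $g$ at position $0$ and $e$ elsewhere. Then $f \in Q_H$ trivially (it has no nontrivial negative values), but $f$ is not in $tQ_H$, since any element of $tQ_H$ has the form $th$ with $h \in Q_H$, and $(th)(0) = h(-1) \in H \not\ni g$. Hence $tQ_H \subsetneq Q_H$.

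For condition (b), given any $f \in A$, its support is a finite subset of $\z$, so there is $N$ with $\operatorname{supp}(f) \subseteq [-N, \infty)$; more carefully, pick $n$ large enough that $\operatorname{supp}(f)$ shifted right by $n$ lies entirely in positions $\geq 0$, i.e. $n \geq -\min(\operatorname{supp} f)$ (or $n = 0$ if the support is already in $[0,\infty)$). Then $t^n f$ is supported on nonnegative positions, so all its values at negative positions are $e \in H$, giving $t^n f \in Q_H$. For condition (c), I claim $n_0 = 0$ works, i.e. $Q_H + Q_H \subseteq Q_H$: if $f, f' \in Q_H$ then for $i > 0$, $(f + f')(-i) = f(-i) f'(-i) \in H$ because $H$ is a subgroup and both factors lie in $H$. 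This is exactly the point where the hypothesis that $H$ is a \emph{subgroup} (closed under the group operation) is used.

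The three conditions together say the action of $t$ is confining $A$ into $Q_H$, and the strictness established in (a) upgrades this to \emph{strictly} confining, which is the assertion of the lemma. I do not expect any genuine obstacle here; the only points requiring care are bookkeeping with the direction of the shift (right shift versus left shift, and the sign convention $t.b = tbt^{-1}$) and remembering that finite support is what makes condition (b) work. The substantive content — that $H$ being a proper subgroup is exactly what makes (a) strict and (c) hold with $n_0 = 0$ — is straightforward.
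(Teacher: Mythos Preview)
Your proof is correct and follows essentially the same approach as the paper: you verify the three conditions of Definition \ref{gennewconfine} directly, using the right-shift formula $(tf)(-i)=f(-i-1)$ for (a), finite support for (b), and closure of $H$ under multiplication for (c) with $n_0=0$, and you witness strictness via a function supported at $0$ with value in $G\setminus H$. The only cosmetic difference is that you phrase the strictness argument by computing $(th)(0)=h(-1)\in H$, whereas the paper phrases it contrapositively, but the content is identical.
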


\begin{proof} We prove the lemma by verifying the conditions of Definition \ref{gennewconfine}. Let $f \in Q_H$. Then $tf$ satisfies $tf(-i) = f(- i - 1) \in H $ for all $i \geq 1$. Thus $tf \in Q_H$. To see that the  containment of $tQ_H$ into $Q_H$ is strict, observe that since $H$ is a proper subgroup of $G$,  there exists $g \in G \backslash H$. Define a function $p \in A$ by $p(0) = g$ and identity everywhere else. Then $p \notin tQ_H$. Indeed, if $p \in tQ_H$, then we must have a function $p' \in Q_H$ such that $p'(-1) \notin H$, which is  impossible. This proves condition (a) of Definition \ref{gennewconfine}. 

Given any $f \in A$, $f$ has finite support. Thus, there exists the smallest negative integer $-n$ in the support of $f$. But then $t^nf  \in Q_H$ since $t^nf(-i) = e$ for all $i \geq 1$. Thus condition (b) of Definition \ref{gennewconfine} holds. To see that condition (c) holds, observe that $Q_H + Q_H \subset Q_H$ since $H$ is a subgroup and the functions are combined component-wise. 
\end{proof} 

Similar to the above result, we can prove the following.

\begin{lem}\label{2diff} Let $H \in \mathbb{S}_G$. Then the action of $t^{-1}$ is strictly confining $A$ into the set $Q'_H \subset A$ given by $$Q'_H = \{f \in A \mid f(i) \in H \hspace{5pt} \forall \hspace{2pt} i > 0\}.$$ 
\end{lem}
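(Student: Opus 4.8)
The plan is to mirror the proof of Lemma \ref{1diff} almost verbatim, exchanging the roles of $t$ and $t^{-1}$ and exchanging the negative half-line for the positive half-line. That is, I will verify the three conditions of Definition \ref{gennewconfine} for the action of $t^{-1}$ and the set $Q'_H = \{f \in A \mid f(i) \in H \ \forall\, i > 0\}$.

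First I would check condition (a): if $f \in Q'_H$, then for every $i \geq 1$ we have $t^{-1}f(i) = f(i+1) \in H$, since $i+1 > 0$, so $t^{-1}f \in Q'_H$. For strictness of the inclusion $t^{-1}Q'_H \subsetneq Q'_H$, I would use that $H$ is a proper subgroup: pick $g \in G \setminus H$ and let $p \in A$ be the function with $p(0) = g$ and identity elsewhere. Then $p \in Q'_H$ (its only nontrivial value is at $0$, and the condition only constrains positive coordinates), but $p \notin t^{-1}Q'_H$, because if $p = t^{-1}p'$ for some $p' \in Q'_H$, then $p'(1) = p(0) = g \notin H$, contradicting $p' \in Q'_H$. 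Next, condition (b): given arbitrary $f \in A$, finite support guarantees a largest positive integer $n$ appearing in the support (take $n = 0$ if the support contains no positive integers), and then $t^{-n}f(i) = f(i+n) = e$ for all $i \geq 1$, so $t^{-n}f \in Q'_H$. Finally, condition (c) holds with $n_0 = 0$, since $Q'_H$ is closed under the coordinate-wise group operation: $H$ being a subgroup implies $(f + g)(i) = f(i)g(i) \in H$ for $i > 0$ whenever $f, g \in Q'_H$.

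There is no real obstacle here; the only point that warrants a sentence of care is the correct bookkeeping on shifts, namely that $t^{-1}f(i) = f(i+1)$ so that applying $t^{-1}$ moves the support leftward and hence \emph{toward} the positive half-line being unconstrained, which is why it is $t^{-1}$ rather than $t$ that confines $A$ into $Q'_H$. I would simply state the proof is "symmetric to that of Lemma \ref{1diff}" and spell out only these sign-sensitive details.

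\begin{proof} The proof is symmetric to that of Lemma \ref{1diff}, interchanging the roles of $t$ and $t^{-1}$. Let $f \in Q'_H$. Then $t^{-1}f(i) = f(i+1) \in H$ for all $i \geq 1$, so $t^{-1}f \in Q'_H$; this shows $t^{-1}Q'_H \subset Q'_H$. The containment is strict: since $H$ is a proper subgroup, pick $g \in G \setminus H$ and let $p \in A$ be given by $p(0) = g$ and identity elsewhere. Then $p \in Q'_H$, but $p \notin t^{-1}Q'_H$, since otherwise $p = t^{-1}p'$ for some $p' \in Q'_H$ with $p'(1) = p(0) = g \notin H$, a contradiction. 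This verifies condition (a) of Definition \ref{gennewconfine}. For condition (b), any $f \in A$ has finite support, so there is a largest positive integer $n$ in its support (or $n = 0$ if the support has no positive element); then $t^{-n}f(i) = f(i+n) = e$ for all $i \geq 1$, hence $t^{-n}f \in Q'_H$. For condition (c), $Q'_H + Q'_H \subset Q'_H$ because $H$ is a subgroup and the functions are combined component-wise, so $n_0 = 0$ works. \end{proof}
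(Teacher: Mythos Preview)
Your proof is correct and is exactly the symmetric version of the proof of Lemma~\ref{1diff} that the paper has in mind; the paper itself does not spell out any details for Lemma~\ref{2diff}, simply stating that the result follows by the same argument. Your bookkeeping on the shift $t^{-1}f(i)=f(i+1)$ and the choice of the witness $p$ with $p(0)=g$ precisely mirror the paper's treatment of Lemma~\ref{1diff}.
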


Note that $Q_H$ and $Q'_H$ are non-empty since the function that takes identity everywhere is in both sets. Thus, each $H \in \mathbb{S}_G$ defines two quasi-parabolic structures on $\GwrZ$ given by the equivalence classes of the generating sets $S_H = \{Q_H, t^{\pm 1}\}$ and $S'_H = \{ Q'_H, t^{\pm 1}\}$. We will fix this notation for the rest of the paper. 

\begin{lem}\label{wdor} The maps $\psi, \psi' \colon \mathbb{S}_G \rightarrow \H_{qp}(G)$ given by $H \mapsto [S_H]$ and $H \mapsto [S'_H]$ respectively are well-defined and order reversing. 
\end{lem}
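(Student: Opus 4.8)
The claim has two halves: well-definedness (if $X \sim Y$ among generating sets of the subgroups $H$ and $K$ — here meaning: if $H$ and $K$ give rise to the same structure, we need at least that each map is a genuine function, i.e. that $[S_H]$ really is a well-defined equivalence class depending only on $H$) and monotonicity. The first half is essentially immediate: for each fixed $H \in \mathbb{S}_G$ the set $Q_H$ is a concretely specified subset of $A$, so $S_H = \{Q_H, t^{\pm 1}\}$ is a concretely specified generating set and $[S_H]$ is a well-defined element of $\GG$; by Lemma \ref{1diff} and Proposition \ref{niceprop} together with Remark \ref{wegetlin}, $\Gamma(\GwrZ, S_H)$ is hyperbolic and the action is regular focal, so $[S_H] \in \H_{qp}(\GwrZ)$. (Strictly, $\psi$ maps into $\H_{qp}(\GwrZ)$, not $\H_{qp}(G)$ — I would read the codomain that way.) The same applies to $\psi'$ via Lemma \ref{2diff}. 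So "well-defined" reduces to checking that $S_H$ is indeed a generating set of $\GwrZ$, which follows because $Q_H$ contains all functions supported on $\{0\}$ together with at least one nontrivial value (indeed all of $G$ appears as $f(0)$ for $f \in Q_H$), so $\{f : \operatorname{supp}(f) \subseteq \{0\}\} \cup \{t^{\pm 1}\} \subseteq S_H$ already generates.

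\textbf{Order-reversing.} This is the substantive point. Suppose $H \leq K$ are proper subgroups of $G$; I want $[S_K] \preccurlyeq [S_H]$, i.e. $S_K \preceq S_H$, i.e. $\sup_{s \in S_K} |s|_{S_H} < \infty$. Since $t^{\pm 1} \in S_H$ as well, it suffices to bound $|f|_{S_H}$ uniformly over $f \in Q_K$. The natural idea is: first recall $H \leq K$ gives $Q_H \subseteq Q_K$ (a function whose negative-coordinate values lie in $H$ a fortiori has them in $K$) — but that gives a bound the wrong way. The right move is to observe that $Q_K$ is covered by a uniformly bounded-diameter family in the $S_H$-metric. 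Concretely, for $f \in Q_K$: write $f = f_{\geq 0} + f_{<0}$ where $f_{<0}$ is supported on negative coordinates (so $f_{<0} \in Q_H$ automatically, since its values at negative coordinates are exactly $f$'s values, which lie in $K$ — wait, they need to lie in $H$). That last parenthetical exposes the real issue: $f_{<0}$ need not be in $Q_H$. So instead I would argue: any $f \in Q_K$ can be written as a product of boundedly many elements each of which is either in $Q_H$ or is $t^{\pm 1}$-conjugate of something controlled. The cleanest route: use that $Q_K \subseteq Q_H \cdot (\text{finite set depending on } H,K)$? This fails too since $K/H$ need not be finite.

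\textbf{The correct mechanism, and the obstacle.} The actual key observation should be that $Q_K \subseteq \bigcup_{g \in G} g \cdot Q_H$ is false, but $Q_K \subseteq \bigcup_{n \geq 0} t^{n} Q_K t^{-n}$-type shifting combined with $Q_H \supseteq \{$functions supported on $\{0\}\}$ is the engine. Precisely: given $f \in Q_K$ with support in $[-m, M]$, note $t^{-1} f t \in Q_K$ still, and after enough left-conjugation every coordinate that caused trouble (values in $K \setminus H$ at negative positions) can be shifted to position $0$ where $Q_H$ imposes no constraint; peeling off one "bad layer" at a time expresses $f$ as a product $\prod t^{\pm 1} q_i$ with each $q_i \in Q_H$ — but the number of factors depends on $m$, so it is not uniformly bounded. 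I therefore expect the genuine argument to instead bound $|f|_{S_H}$ not uniformly over all of $Q_K$ but to exploit condition (c) — $t^{n_0}(Q_H + Q_H) \subseteq Q_H$ — to absorb: since $S_H$ is a generating set making the Cayley graph hyperbolic and quasi-parabolic with $A$ undistorted-by-shifts, one shows $\Gamma(\GwrZ, S_H)$ quasi-isometrically embeds a graph in which $Q_K$ has bounded diameter. The main obstacle, then, is precisely this: verifying $\sup_{f \in Q_K}|f|_{S_H} < \infty$ when $K/H$ is infinite. I would resolve it by proving the sharper statement $Q_K \subseteq Q_H + Q_H + \cdots$? No — I would instead prove $Q_K \subseteq \{q + p : q \in Q_H,\ p \in A,\ \operatorname{supp}(p) \subseteq \{0,\dots\}\cap(\text{finite})\}$...

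Let me restart this paragraph cleanly: \textbf{The obstacle is} bounding $|f|_{S_H}$ over $f \in Q_K$ when $[K:H] = \infty$. The resolution I anticipate: decompose $f \in Q_K$ as $f = u + v$ where $v \in Q_H$ collects all coordinates where $f$ already lies in $H$, and $u$ is supported on the finite set of negative coordinates where $f \in K \setminus H$. Then $u$ is a finite sum of functions $u_j$ each supported on a single coordinate $-i_j$; each such single-coordinate function $u_j$ equals $t^{-i_j}(\hat u_j) t^{i_j}$ where $\hat u_j$ is supported on $\{0\}$, hence $\hat u_j \in Q_H$, hence $|u_j|_{S_H} \leq 2i_j + 1$. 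This still depends on $i_j$. The escape is that large $i_j$ forces a large word in the $t$-direction anyway, contradicting nothing — so in fact $\sup_{f \in Q_K}|f|_{S_H} = \infty$ unless... Hence I suspect the intended proof uses the \emph{action/space} reformulation: $[S_K] \preccurlyeq [S_H]$ iff the $G$-action on $\Gamma(\GwrZ,S_H)$ has $Q_K$ of bounded orbit-diameter, and since under the confining data of Lemma \ref{1diff} the set $Q_H$ is exactly a \emph{horoball-like} region, $Q_K$ is contained in a bounded neighborhood of $Q_H$ in $\Gamma(\GwrZ, S_H)$ because both $Q_H, Q_K$ are "at the same horocyclic level" — this is where I would invoke de Cornulier's Busemann computation from the remark after Proposition \ref{niceprop}. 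So: the plan is (i) dispatch well-definedness as above; (ii) reduce order-reversal to a diameter bound; (iii) prove the diameter bound using that $Q_K$ lies in the $0$-sublevel set of the Busemann function on $\Gamma(\GwrZ, S_H)$ and that $\Gamma(\GwrZ,S_H)$ restricted to this sublevel set has bounded... — and \textbf{the main obstacle is exactly step (iii)}: controlling the $S_H$-distance across the potentially infinite quotient $K/H$, which I would handle by showing directly that for $f \in Q_K$, moving each "bad" coordinate value (in $K$) leftward past position $0$ and back costs a bounded amount because the shift generators $t^{\pm1}$ are in $S_H$ and commute appropriately — giving $|f|_{S_H} \leq C$ with $C = C(H,K)$ independent of $f$, after carefully arranging that the bookkeeping telescopes.
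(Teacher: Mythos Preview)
You have the direction of the preorder backwards, and this is what sends you into the weeds. From Definition~\ref{def-GG}, $X \preceq Y$ means $\sup_{y \in Y}|y|_X < \infty$. Hence $[S_K] \preccurlyeq [S_H]$ unpacks to $S_K \preceq S_H$, which means $\sup_{s \in S_H}|s|_{S_K} < \infty$ --- \emph{not} $\sup_{s \in S_K}|s|_{S_H} < \infty$ as you wrote. In other words, you need to bound the $S_K$-length of elements of $S_H$, not the other way around.

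With the correct direction, the proof is a one-liner and the paper's argument is exactly the observation you dismissed: $H \leq K$ gives $Q_H \subseteq Q_K$, hence $S_H \subseteq S_K$, hence every element of $S_H$ has $S_K$-length at most $1$, hence $[S_K] \preccurlyeq [S_H]$. You even noticed this inclusion and said ``but that gives a bound the wrong way'' --- it gives a bound the right way. The quantity you then struggle to control, $\sup_{f \in Q_K}|f|_{S_H}$, is genuinely infinite whenever $H \lneq K$ (this is essentially the content of Lemma~\ref{genembedding}), so the ``obstacle'' you identify is not an obstacle to the lemma but a proof that the reverse inequality fails.
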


\begin{proof} We will prove the lemma for the map $\psi$; the arguments for the map $\psi'$ are symmetric. Clearly $\psi$ is a well defined map. Now let $H, K \in \mathbb{S}_G$ such that $H \leq K$. Let $f \in Q_H$. Then it is easy to see that $f \in Q_K$ also. Thus $\op{sup}_{x \in S_H} |x|_{S_K} = 1$, and so $[S_K] \preccurlyeq [S_H]$. Thus $\psi$ is also order reversing. 
\end{proof}

\begin{lem}\label{genembedding} The maps $\psi, \psi'$ defined in Lemma \ref{wdor} are injective.
\end{lem}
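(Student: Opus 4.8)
The plan is to show that $\psi$ (and symmetrically $\psi'$) separates distinct proper subgroups by producing, from a pair $H \neq K$, an element of $\GwrZ$ whose word length with respect to one of the two generating sets is bounded while its word length with respect to the other grows without bound along a sequence. By the usual criterion, $[S_H] = [S_K]$ would require $\sup_{x \in S_H} |x|_{S_K} < \infty$ and $\sup_{x \in S_K} |x|_{S_H} < \infty$; refuting one of these suffices. Since $H \neq K$, without loss of generality there is a group element $g$ lying in one but not the other — say $g \in K \setminus H$ (the case $g \in H \setminus K$ is handled symmetrically, and covers the situation where neither contains the other as well as the case $H < K$).

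First I would fix $g \in K \setminus H$ and, for each $n \geq 1$, consider the function $f_n \in A$ supported on the negative coordinates $\{-1, -2, \ldots, -n\}$ with $f_n(-i) = g$ for $1 \le i \le n$ and $f_n(j) = e$ otherwise. By construction $f_n \in Q_K$ because $g \in K$, so $|f_n|_{S_K} \le 1$. The heart of the argument is the lower bound $|f_n|_{S_H} \to \infty$. A word in $S_H = \{Q_H, t^{\pm 1}\}$ representing $f_n$ alternates between elements of $Q_H$ and powers of $t^{\pm 1}$; conjugating a function in $Q_H$ by a power of $t$ shifts its support, and adding functions in $Q_H$ keeps the values in $H$ \emph{at each fixed index of that summand}. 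The key observation is that to deposit a value outside $H$ at a negative coordinate $-i$, the word must have "used up" a shift that moved that coordinate into the non-negative range (where $Q_H$ imposes no constraint) before the offending $Q_H$-letter was applied, and then shifted it back. Making this precise: if a word of syllable length $\ell$ in $S_H$ represents a function $f$, then $f(-i) \in H$ for all but at most (some linear function of) $\ell$ values of $i$ — intuitively each $Q_H$-syllable combined with the surrounding shifts can "fix up" only a bounded band of coordinates. Since $f_n$ has $n$ negative coordinates all carrying the value $g \notin H$, we get $\ell \gtrsim n$, hence $|f_n|_{S_H} \ge |f_n|_{\mathcal{S}_H\text{-syllable}} \to \infty$, so $\sup_{x \in S_K}|x|_{S_H} = \infty$ and $[S_H] \ne [S_K]$.

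I expect the main obstacle to be the bookkeeping in that lower bound: one must track how a sequence of shifts and component-wise additions of $Q_H$-functions can produce out-of-$H$ values at negative coordinates, and argue that the number of such "bad" coordinates is controlled by the syllable length. The cleanest way I would organize this is to induct on the syllable length of a word $w = q_0 t^{\epsilon_1} q_1 t^{\epsilon_2} \cdots$ (with $q_j \in Q_H \cup \{e\}$, $\epsilon_j = \pm 1$), reading right to left and maintaining the invariant that, after processing a suffix involving $k$ shift-syllables, the accumulated partial product is a function whose "bad set" $\{i < 0 : \text{value} \notin H\}$ is contained in an interval of length bounded in terms of $k$ — because each left/right shift translates the bad set by one, and each $Q_H$-letter can only enlarge the bad set within the window where the cumulative shift has temporarily exposed non-negative indices. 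Alternatively, one can invoke the confining-subset / horofunction picture from Proposition \ref{niceprop} and Remark \ref{wegetlin}: the Busemann pseudocharacter is $w t^k \mapsto k$ for both structures, and the "level sets" of this character in $\Ga(\GwrZ, S_H)$ are coarsely the cosets $A t^k$, on which the induced metric is, up to quasi-isometry, the word metric of $A$ with respect to $\bigcup_{j \ge 0} t^{-j}(Q_H)$; comparing these induced metrics for $H$ versus $K$ reduces the injectivity of $\psi$ to the observation that $\bigcup_j t^{-j}(Q_H) \neq \bigcup_j t^{-j}(Q_K)$ in a bi-Lipschitz-detectable way, which again comes down to $H \neq K$. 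Either route finishes the lemma; I would present the first, more self-contained one.
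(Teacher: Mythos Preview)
Your approach is essentially the paper's: exhibit elements of one $Q$-set whose word length in the other generating set is unbounded, via the observation that a length-$\ell$ word in $S_H$ can only place values outside $H$ at negative coordinates of depth at most (roughly) $\ell$. The paper uses the simpler single-point-support functions $p_i$ (with $p_i(-i)=h\in H\setminus K$ and identity elsewhere), for which $|p_i|_{S_H}=1$ while $|p_i|_{S_K}\to\infty$ by exactly the shift-bookkeeping you sketch; your interval-support $f_n$ work for the same reason but are more elaborate than necessary, since already the single bad coordinate at depth $-n$ forces $|f_n|_{S_H}\gtrsim n$.
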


\begin{proof} We will prove the lemma for the map $\psi$; the arguments for the map $\psi'$ are symmetric. Let $H, K \in \mathbb{S}_G$ such that $H \neq K$. Without loss of generality, we may assume that $H \not \leq K$. Choose an element $h \in H \backslash K$. Define the following functions in $A$ for each $i \geq 1$:
\begin{center} $p_i(-i) = h$ and $p_i$ is identity everywhere else. \end{center} Then it is easy to check that $\op{sup}_i |p_i|_{S_H} =1$ but $\op{sup}_i |p_i|_{S_K} = +\infty$. Thus $[S_H] \neq [S_K]$.\end{proof}

\begin{lem}\label{genstrareinc} Let $H, K \in \mathbb{S}_G$. Then the structures $[S_H]$ and $[S'_K]$ are incomparable. \end{lem}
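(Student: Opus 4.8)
The plan is to show that the two families of quasi-parabolic structures $[S_H]$ and $[S'_K]$ are always incomparable by finding, for any choice of $H, K \in \mathbb{S}_G$, elements that witness failure of domination in both directions. The key conceptual point is that $Q_H$ only constrains the values of a function on the negative coordinates, while $Q'_K$ only constrains the values on the positive coordinates; moreover the relevant shift directions are opposite ($t$ confines into $Q_H$, while $t^{-1}$ confines into $Q'_K$). So a function supported far out on the positive axis is cheap in $S_H$ but expensive in $S'_K$, and symmetrically a function supported far out on the negative axis is cheap in $S'_K$ but expensive in $S_H$.

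Concretely, first I would establish $[S_H] \not\preccurlyeq [S'_K]$, i.e. that $\sup_{x \in S'_K}|x|_{S_H} = \infty$ is false only if... — actually one must be careful about the direction of the order. Recall $[S'_K] \preccurlyeq [S_H]$ would mean $\sup_{x\in S_H}|x|_{S'_K} < \infty$. Since $S_H = \{Q_H, t^{\pm 1}\}$, the only way this can fail is if $Q_H$ is not $S'_K$-bounded. Pick $g \in G$, $g \neq e$, and for each $i \geq 1$ let $q_i \in A$ be the function with $q_i(-i) = g$ and identity elsewhere. Each $q_i$ lies in $Q_H$ — no wait, that requires $g \in H$; instead take $g$ arbitrary nontrivial and note $q_i \in Q_H$ iff $g \in H$. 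To stay safe, I would instead use that $Q_H$ contains all functions supported on $\{0\}$ together with arbitrary negative-coordinate values in $H$: the cleanest witnesses are functions $r_i$ supported at the single positive coordinate $i$, which always lie in $Q_H$ (the negative-coordinate condition is vacuous), and one checks $|r_i|_{S'_K} \to \infty$ because in $\Gamma(G \op{wr}\z, S'_K)$ one needs roughly $i$ applications of $t^{-1}$ (each of cost $1$) to move such a value into the region controlled by $Q'_K$. Hence $Q_H$ is $S'_K$-unbounded, so $[S'_K] \not\preccurlyeq [S_H]$. Symmetrically, using functions supported at a single negative coordinate $-i$ (which always lie in $Q'_K$) and applying Lemma~\ref{2diff}'s structure, $Q'_K$ is $S_H$-unbounded, so $[S_H] \not\preccurlyeq [S'_K]$. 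Together these give incomparability.

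The main obstacle — and the step I would spend the most care on — is the lower bound $|r_i|_{S'_K} \geq c\cdot i$ for the single-coordinate functions: showing that a function pushed far out on the "wrong" side genuinely costs linearly many generators from $S'_K$, rather than being expressible cheaply using $Q'_K$ and a bounded number of $t^{\pm 1}$'s. The right tool is the Busemann pseudocharacter $p$ associated to the quasi-parabolic action $G \op{wr}\z \act \Gamma(G\op{wr}\z, S'_K)$: by the discussion following Proposition~\ref{niceprop}, $p$ is (up to bounded error) the homomorphism $w t^k \mapsto k$, which is insensitive to $r_i$ since $r_i \in A$ has $k=0$; so $p$ alone does not detect the cost and one instead argues directly with the word metric. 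I would argue as follows: write any $S'_K$-word equal to $r_i$; the subword of $t^{\pm 1}$ letters, read as an element of $\z$, must sum to $0$; grouping, the element $r_i$ is a product of $\z$-conjugates $t^{m}\cdot q$ of elements $q \in Q'_K$ with the $m$'s bounded in absolute value by the length of the word; if the length were $< i$ then every such conjugate $t^m q$ has support inside $(-i, \infty)$ wait, that's the wrong bound — rather, all conjugates involved have support within distance (word length) of the origin, and since $Q'_K$-elements have unrestricted values at coordinate $0$ but we need value $g \neq e$ at coordinate $i$, we need the $t$-shift-count to reach $i$, forcing word length $\geq i$. Making this accounting precise (tracking how far the support of a partial product can travel per generator) is the technical heart; everything else is the symmetric bookkeeping already modeled on Lemmas~\ref{genembedding} and~\ref{wdor}.
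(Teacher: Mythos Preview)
Your overall strategy is the paper's strategy, and the lower-bound sketch in your last paragraph (write an $S'_K$-word for $r_i$, conjugate-collect the $Q'_K$ letters, and track how far shifts can move support) is the right mechanism. But there is a genuine gap: you never use that $H$ and $K$ are \emph{proper} subgroups, and without that your witnesses do not work.

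Concretely, you take $r_i$ supported at the single positive coordinate $i$ with value ``$g$ arbitrary nontrivial'' and assert $|r_i|_{S'_K}\to\infty$. This is false in general: if $g\in K$ then $r_i\in Q'_K$ already, so $|r_i|_{S'_K}=1$ for every $i$. Your shift-count heuristic ``we need the $t$-shift-count to reach $i$'' implicitly assumes $Q'_K$-elements cannot carry a nontrivial value at coordinate $i$ without shifting, but $Q'_K$ allows any value in $K$ at positive coordinates. The same problem occurs symmetrically on the other side with $H$. The fix is exactly what the paper does: for the direction showing $[S'_K]\not\preccurlyeq [S_H]$, choose $g'\in G\setminus K$ (which exists since $K\in\mathbb{S}_G$ is proper), set $p_i(i)=g'$ and identity elsewhere; then $p_i\in Q_H$ trivially, while $p_i\notin Q'_K$ and your conjugate-support argument now goes through to give $|p_i|_{S'_K}\ge i$. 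For the other direction, choose $g\in G\setminus H$, set $f_i(-i)=g$; then $f_i\in Q'_K$ but $|f_i|_{S_H}\to\infty$. Once you insert these two choices, your proof coincides with the paper's (the paper simply asserts the unboundedness as ``easy to verify''; your support-tracking paragraph is a correct way to verify it).
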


\begin{proof}  Since $H < G$, there exists an element $g \in G \backslash H$. It is easy to verify that the functions $f_i \in A$ for $i \geq 1$, given by \begin{center} $f_i(-i) = g$ and $f_i$ is identity everywhere else \end{center} satisfy  $\displaystyle \op{sup}_{i} |f_i|_{S'_K}  = 1$, while $\displaystyle \op{sup}_{i} |f_i|_{S_H}  = +\infty$.  

Similarly, we can find an element $g' \in G \backslash K$. Then the functions $p_i, i\geq 1$ given by \begin{center} $p_i(i) = g'$ and $p_i$ is identity everywhere else \end{center} satisfy the condition that $\displaystyle \op{sup}_{i} |p_i|_{S_H} = 1$, but $\displaystyle \op{sup}_{i} |p_i|_{ S'_{K}} = +\infty$. Thus $[S_H]$ and $[S'_K]$ are incomparable.
\end{proof} 

\begin{proof}[Proof of Theorem \ref{inintro} (1)] By Lemmas \ref{wdor} and \ref{genembedding}, the maps $\psi, \psi'$ create two copies of $\mathbb{S}_G$ in $\H_{qp}(\GwrZ)$. That these copies are incomparable follows from Lemma \ref{genstrareinc}.  

Since all of these structures yield regular quasi-parabolic structures, the Busemann pseudocharacter associated to any of these actions is always proportional to the homomorphism which is the standard epimorphism to $\z$. It follows from \cite[Lemma 4.15]{ABO} that all these (pseudo) characters results in the same orientable lineal action on $\GwrZ$. If $[L]$ denotes this lineal structure on $\GwrZ$, $[L] \preccurlyeq [S_H]$ and $[L] \preccurlyeq [S'_H]$ for every $H \in \mathbb{S}_G$ by \cite[Corollary 4.26]{ABO}. Thus the two copies of $\mathbb{S}_G$ dominate a common lineal structure, which in turn obviously dominates the trivial structure. This results in an embedding of the entire poset from Figure \ref{ggenstr} into $\H(\GwrZ)$. 
\end{proof} 

\begin{cor}\label{cor2} There exits a finitely generated group $W$ such that $\H_{qp}(W)$ contains a chain of cardinality continuum. 
\end{cor}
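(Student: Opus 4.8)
The plan is to deduce Corollary \ref{cor2} directly from Theorem \ref{inintro} (1) by exhibiting a suitable choice of the group $G$. Since Theorem \ref{inintro} (1) shows that $\mathcal{B}(G) \subset \H(\GwrZ)$, and in particular that each copy of $\mathbb{S}_G$ sits inside $\H_{qp}(\GwrZ)$ as a sub-poset, it suffices to find a \emph{finitely generated} group $G$ whose poset of proper subgroups $\mathbb{S}_G$ contains a chain of cardinality continuum; then $W = \GwrZ$ is finitely generated (being generated by a finite generating set of $G$ together with $t$) and $\H_{qp}(W)$ inherits that chain.

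The natural candidate is $G = \mathbb{F}_2$, the free group of rank $2$, exactly as suggested in the discussion preceding Corollary \ref{uncqp}. First I would recall (or quickly verify) that $\mathbb{P}(\mathbb{N})$, the power set of $\mathbb{N}$ ordered by inclusion, embeds into $\mathbb{S}_{\mathbb{F}_2}$: fix a free basis $\{a,b\}$ and an infinite set $\{g_n\}_{n \in \mathbb{N}}$ of elements of $\mathbb{F}_2$ that freely generate a free subgroup of infinite rank (for instance $g_n = a^n b a^{-n}$), and send a subset $I \subseteq \mathbb{N}$ to the subgroup $\langle g_n : n \in I\rangle$; because the $g_n$ are free, $I \subseteq J$ if and only if $\langle g_n : n\in I\rangle \leq \langle g_n : n\in J\rangle$, and all these subgroups are proper (they have infinite index, or one simply notes that $b \notin \langle a^n b a^{-n} : n\rangle$). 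Now $\mathbb{P}(\mathbb{N})$ contains a chain of cardinality continuum --- e.g.\ identify $\mathbb{N}$ with $\mathbb{Q}$ and take the initial segments $\{q \in \mathbb{Q} : q < r\}$ for $r \in \mathbb{R}$, which form a strictly increasing chain indexed by $\mathbb{R}$. Composing with the embedding $\mathbb{P}(\mathbb{N}) \hookrightarrow \mathbb{S}_{\mathbb{F}_2}$ gives a chain of cardinality continuum in $\mathbb{S}_{\mathbb{F}_2}$, and composing further with the order-reversing \emph{injection} $\psi\colon \mathbb{S}_{\mathbb{F}_2} \to \H_{qp}(\mathbb{F}_2 \op{wr} \z)$ from Lemmas \ref{wdor} and \ref{genembedding} (order-reversing still sends chains to chains) yields the desired chain inside $\H_{qp}(W)$ with $W = \mathbb{F}_2 \op{wr} \z$.

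I do not anticipate a serious obstacle here: everything of substance has already been established in Theorem \ref{inintro} (1). The only points requiring a line of care are (i) that $W = \mathbb{F}_2 \op{wr}\z$ is finitely generated, which is immediate since $\mathbb{F}_2$ is; (ii) that the subgroups $\langle g_n : n\in I\rangle$ are genuinely proper in $\mathbb{F}_2$ for all $I \subseteq \mathbb{N}$, which is clear because $b$ lies in none of them; and (iii) that an order-reversing injection preserves the property of being a chain, which is trivial. If one prefers to reuse Corollary \ref{uncqp} verbatim rather than re-run the argument, one may simply observe that $\mathbb{P}(\mathbb{N})$ contains a chain of cardinality continuum and hence so does its isomorphic copy inside $\H_{qp}(\mathbb{F}_2 \op{wr}\z)$ provided by that corollary, taking $W = \mathbb{F}_2 \op{wr} \z$.
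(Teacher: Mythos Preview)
Your proposal is correct and follows essentially the same route as the paper: take $W=\mathbb{F}_2\op{wr}\z$, embed $\mathbb{P}(\mathbb{N})$ into $\mathbb{S}_{\mathbb{F}_2}$ via a free subgroup of infinite rank, and then apply Theorem~\ref{inintro}(1) (equivalently Lemmas~\ref{wdor} and~\ref{genembedding}) together with the fact that $\mathbb{P}(\mathbb{N})$ contains a continuum chain. The only cosmetic slip is your properness check ``$b\notin\langle a^nba^{-n}:n\rangle$'', which fails if $n=0$ is allowed; your alternative observation that these subgroups have infinite index (or simply that $a$ lies in none of them) is the correct justification.
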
 

\begin{proof} Let $\mathbb{F}_{\infty} = \langle a_1, a_2, a_3, a_4, .... \rangle$ be the free group on infinitely many generators.  For each proper subset $I \subset \mathbb{N}$, set $H_I = \langle a_{2i} \mid  i \in I \rangle$. Then $H_I < \mathbb{F}_{\infty}$ and we have the following properties:
\begin{itemize} 
\item[(1)]  If $I \neq J \subset \mathbb{N}$, then $H_I \neq H_J$ (since the groups are free)
\item[(2)] If $I \subset J$, then $H_I < H_J$. 
\end{itemize} 

This creates an embedding of $\mathbb{P}(\mathbb{N})$ into $\mathbb{S}_{\mathbb{F}_{\infty}}$. Set $W = \mathbb{F} _2 \op{wr} \z$, where $\mathbb{F}_2$ is the free group on 2 generators. Since $\mathbb{F}_\infty < \mathbb{F}_2$, we get the following sequence of embeddings  $$\mathbb{P}(\mathbb{N}) \hookrightarrow \mathbb{S}_{\mathbb{F}_{\infty}} \hookrightarrow \mathbb{S}_{\mathbb{F}_2} \hookrightarrow \mathcal{H}_{qp}(W).$$ Since $\mathbb{P}(\mathbb{N})$ contains chains (and antichains) of cardinality continuum, it follows that so does $\H_{qp}(W)$. \end{proof} 

\begin{rem} Let $G$ be a countable group. Let $H < G$ such that $H = \langle X \rangle$ and $X = \{x_1, x_2, x_3, ...\}$ is countable. For each $i \in \z$, create a copy $X_i$ of $X$, such that $X_i = \{ x_{1i}, x_{2i}, x_{3i},...\}$, where $x_{ji}$ is a copy of $x_j$ in the $i$th copy. Let $R$ be the set of relations that $X$ satisfies and let $R_i$ be a copy of $R$ for each $i \leq -1$. 

Further let the elements of $G$ be enumerated as $X \cup G \backslash X$. For each $i \geq 0$, create a copy $G_i = X_i \cup G \backslash X_i$. Let $S$ be the set of relations that $G$ satisfies and containing R and let $S_i$ be a copy of $S$ for each $i \geq 0$. 

Then $\GwrZ = \left\langle t^{\pm 1}, \{X_i\}_{i \leq -1}, \{G_j\}_{j \geq 0} \mid tx_{ji}t^{-1} = x_{j i+1} , R_i , S_j \hspace{2pt} \forall \hspace{2pt} i \leq -1, j \geq 0 \right\rangle$ is an ascending HNN-extension of $B = \left \langle ...X_{-2}, X_{-1}, G_0, G_1, G_2.... \right \rangle$, and yields precisely the structure $[S_H]$ on $\GwrZ$.  Let $T_H$ be the associated Bass-Serre tree. Since the extension is ascending, the group $\GwrZ$ fixes an end of $T_H$, and hence the action of the group on $T_H$ is quasi-parabolic. Using the Svarc-Milnor map, we can conclude that the structure $[S_H]$ corresponds to a  Cayley graph which is a quasi-tree. 

In particular, we can see that $\mathbb{F}_2$ is countable and the subgroups $H_I$ considered in Corollary \ref{cor2} have countable generating sets. Since the antichain and chain of quasi-parabolic structures of cardinality continuum constructed in Corollary \ref{cor2} correspond to the structures $[S_{H_I}]$, we can conclude that they all correspond to quasi-trees.  
\end{rem} 

\section{Quasi-parabolic structures on the Lamplighter groups}\label{lamp}

The goal of this section is to prove Theorem \ref{inintro} (2). Our approach is to first show that all quasi-parabolic structures of the lamplighter groups are regular, and derive a characterization of quasi-parabolic structures on these groups using techniques developed in \cite{Amen}. 

We begin by discussing the structure of the Lamplighter groups. Denoted $L_n, n \geq 2$, the Lamplighter groups are given by the presentation 
\begin{center}
$\langle a, t \mid [a^{t^i}, a^{t^j}] = e \hspace{5pt} \forall i, j \in \mathbb{Z} , \hspace{2pt} a^n =e \rangle$,
\end{center}
where $x^y = yxy^{-1}$. 

Equivalently, this group is the (restricted) wreath product $(\displaystyle \bigoplus_{\mathbb{Z}} \mathbb{Z}_n) \rtimes \mathbb{Z}$. The ``lamplighter" picture of elements of this group is as follows: Take a bi-infinite road of light bulbs placed at integer points. Each bulb has $n$ possible states corresponding
to the elements of $\mathbb{Z}_n$, given by the powers of $a$. There is also a lamplighter, indicating the particular bulb under consideration. The action of the group on this picture is that $t$ (the generator of $\z$) moves the lamplighter one position to the right, and powers of $a$ change the state of the current bulb under consideration. Thus each
element of $L_n$ can be interpreted as a \textit{configuration} of a finite collection of lit lamps in some allowable states, leaving the lamplighter at a fixed integer. The \textit{null configuration}, denoted $\bar{0}$, is the element where all the lamps are off and the lamplighter stands at index $0$. Note that we read the elements of the group from left to right applied to the null configuration. We will fix the above presentation for the remainder of this paper. 

Let $A = \displaystyle \bigoplus_{\mathbb{Z}} \mathbb{Z}_n$. The visualization of $A$ in $L_n$ is that of elements consisting of a finite number of illuminated lamps in some allowable states, while the lamplighter stands at index $0$. In this section, we will use the description of elements of $L_n$ in terms of group rings in order to make several arguments. Let $R = \mathbb{Z}_n [ \mathbb{Z}]$ be the group ring which Laurent polynomials in $t$ over $\z_n$. For $b \in A$, $k \in \{0,1,...n-1\}$ and $m \in \mathbb{Z}$, define $$kt^m.b= (b^k)^{t^m} = t^{m}b^k t^{-m}. \hspace{40pt} (*) $$ 

\begin{lem} $A$ is a free (left) $R$-module, where the module multiplication is defined by extending (*)  canonically to formal sums in $R$. Further, every $b \in A$ represents as $p(t).a$, where $p(t) \in R$. Thus $A$ is isomorphic to $R$ as $R-$modules. 
\end{lem}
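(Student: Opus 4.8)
The plan is to exhibit the module structure explicitly and identify the $R$-module map $R \to A$ given by $p(t) \mapsto p(t).a$. First I would verify that $(*)$ does define a module action: for $b \in A$, $k \in \{0,\dots,n-1\}$, $m \in \z$, the element $kt^m.b = (b^k)^{t^m}$ again lies in $A = \bigoplus_\z \z_n$ (since conjugation by $t$ just shifts the support of the configuration $b^k$, which is finite), and extending by the rule $(p_1 + p_2).b = (p_1.b) + (p_2.b)$ and $(p_1 p_2).b = p_1.(p_2.b)$ to all formal Laurent polynomials $p(t) = \sum_m k_m t^m \in R$ is well-defined because $A$ is abelian (written additively via $+$, the coordinate-wise operation) and the relevant finite sums make sense. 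One should check compatibility with the relations defining $R$: additivity in the coefficients uses that in $\z_n$ we have $b^{k} b^{k'} = b^{k+k'}$, and associativity $t^m.(t^{m'}.b) = t^{m+m'}.b$ is immediate from $t^m(t^{m'} b t^{-m'})t^{-m} = t^{m+m'} b t^{-(m+m')}$; since $A$ is generated as a group by the conjugates $a^{t^j}$ and $a$ has order $n$, these checks suffice.

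Next I would identify the generator. Recall $a \in A$ is the configuration with a single lit lamp in state $1 \in \z_n$ at index $0$. For each $j \in \z$, the element $t^j.a = a^{t^j}$ is the configuration with a single lamp in state $1$ at index $j$, and more generally $k t^j.a$ is the single lamp in state $k$ at index $j$. Since every element $b \in A$ is, by the lamplighter picture, a finite configuration — i.e. $b$ assigns to each $j \in \z$ a value $b_j \in \z_n$, with $b_j = 0$ for all but finitely many $j$ — we can write $b = \sum_{j} b_j t^j . a = p_b(t).a$ where $p_b(t) = \sum_j b_j t^j \in R$ is the corresponding Laurent polynomial. This shows the map $\Phi : R \to A$, $p(t) \mapsto p(t).a$, is surjective. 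For injectivity, if $p(t).a$ is the null configuration $\bar 0$, then reading off coordinate $j$ forces the coefficient of $t^j$ in $p(t)$ to be $0$ in $\z_n$ for every $j$, so $p(t) = 0$; hence $\Phi$ is an $R$-module isomorphism. Freeness on the single generator $\{a\}$ is then immediate, and this also gives that $A$ is free of rank one, i.e. isomorphic to $R$ as a left $R$-module.

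I do not anticipate a serious obstacle here; the statement is essentially the standard identification of the base of a lamplighter group with the group ring. The only point requiring a little care is bookkeeping: making sure the formal extension of $(*)$ to sums in $R$ is independent of how one writes $p(t)$ (this is where one uses that $\z_n$-valued configurations add coordinate-wise and that $a^n = e$ makes the coefficients genuinely live in $\z_n$), and being explicit that "$p(t).a$" on the left is the module action while on the right $p_b$ is reconstructed from the support data of $b$. If one wanted to be fully rigorous about well-definedness, the cleanest route is to define $\Phi$ first by the formula $\Phi(\sum_j k_j t^j) = \sum_j k_j t^j.a$ on Laurent polynomials (no ambiguity, since Laurent polynomials have a canonical expansion), check directly that $\Phi$ is an additive bijection onto $A$ by the support argument above, and then \emph{transport} the $R$-module structure along $\Phi$, observing afterwards that the transported action agrees with $(*)$ on generators and hence everywhere. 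This sidesteps any well-definedness worry entirely.
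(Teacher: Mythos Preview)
Your proposal is correct and follows essentially the same route as the paper: verify the module axioms for the action $(*)$ (the paper checks $(x+y).b=(x.b)(y.b)$ and $x.(bc)=(x.b)(x.c)$; you add the associativity check), then show $\{a\}$ is a free generator by writing any $b\in A$ as $p_b(t).a$ and exhibiting the bijection between $A$ and $R$ (the paper uses $f\colon A\to R$, $b\mapsto p_b(t)$, while you use its inverse $\Phi\colon R\to A$). Your treatment of well-definedness and injectivity is more explicit than the paper's, which simply asserts these are ``easy to verify,'' but the underlying argument is the same.
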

\begin{proof} Observe that $A$ is an abelian group under component wise addition. Further, the module multiplication is well defined and sends elements of $A$ to $A$, since the lamplighter remains at index $0$.  Let $x, y \in R$ and $b,c \in A$. 
Clearly, by definition, we have $(x + y).b = (x.b)(y.b)$. Further since shifting and combining configurations of lamps (with lamplighter at zero) is equal to combining the configurations and then shifting, we also have $x.(bc) = (x.b)(x.c)$. Lastly, it is easy to see that the set $\{ a\} \subset A$ is a basis for $A$ as an $R$-module. 

Now let $b \in A$. Then $ b = (t^{i_1}a^{k_1}t^{-i_1})(t^{i_2}a^{k_2}t^{-i_2})...(t^{i_l}a^{k_l}t^{-i_l})$ for some indices $\{i_j , j =1,2,...l\} \subset \z$ and elements $k_j \in \{1,2,...,n-1\}$. Set $p_b(t) = k_1t^{i_1} + k_2t^{i_2} +... k_lt^{i_l}.$ Then $p_b(t) \in R$ and $p_b(t).a = (t^{i_1}a^{k_1}t^{-i_1})(t^{i_2}a^{k_2}t^{-i_2})...(t^{i_l}a^{k_l}t^{-i_l}) = b.$

Since every ring is a module over itself, $R$ is also an $R-$module. We define a map $f \colon A \rightarrow R$ by the rule $f(b)= p_b(t)$. It is easy to verify that this map is a bijective $R-$ module homomorphism.   
\end{proof} 

\begin{lem}\label{repln} Every element $g \in L_n$ represents as $t^m(p(t).a)$, for some $m \in \mathbb{Z}, p(t) \in R$.  
\end{lem}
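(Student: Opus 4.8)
The plan is to use the decomposition $L_n = A \rtimes \z$ together with the previous lemma, which identifies the base group $A$ with the group ring $R = \z_n[\z]$ via $b \mapsto p_b(t)$, where $p_b(t).a = b$. Since $L_n$ is the semidirect product $A \rtimes \langle t \rangle$, every element $g \in L_n$ can be written uniquely as $g = b \cdot t^m$ for some $b \in A$ and $m \in \z$; equivalently (and more conveniently for matching the stated form, in which the $\z$-factor is written on the left) as $g = t^m \cdot b'$ for some $b' \in A$ and $m \in \z$, using that conjugation by $t^m$ sends $A$ to $A$, so $b \cdot t^m = t^m \cdot (t^{-m} b t^m)$ with $t^{-m} b t^m \in A$.

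First I would recall why this semidirect-product normal form holds: $A$ is normal in $L_n$ with quotient $\z = \langle t \rangle$, so the coset of $g$ in the quotient is $t^m$ for a well-defined $m \in \z$, and then $g t^{-m} \in A$; writing $g = (g t^{-m}) t^m$ and conjugating the $A$-part to the other side gives $g = t^m a'$ with $a' = t^{-m}(g t^{-m}) t^m \in A$. Alternatively one can argue combinatorially from the presentation: any word in $a^{\pm 1}, t^{\pm 1}$ can be rewritten, using the relation $t a t^{-1} = a^t$ to move all occurrences of $t^{\pm 1}$ past the $a$'s, collecting a single power $t^m$ (the exponent sum of $t$) on the left and leaving a product of conjugates $t^{i_j} a^{k_j} t^{-i_j}$ which lies in $A$.

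Next, I would apply the preceding lemma to the element $b' \in A$ (equivalently $b$, then translate): it gives $b' = p(t).a$ for a unique Laurent polynomial $p(t) = p_{b'}(t) \in R$. Substituting this into $g = t^m b'$ yields $g = t^m(p(t).a)$, which is exactly the claimed form, with $m \in \z$ and $p(t) \in R$. Uniqueness, if desired, follows because $m$ is determined as the image of $g$ under the canonical epimorphism $L_n \to \z$ and $p(t)$ is then determined by the $R$-module isomorphism $A \cong R$.

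I do not anticipate a serious obstacle here; the lemma is essentially a bookkeeping statement combining the semidirect-product normal form with the already-established identification $A \cong R$. The only point requiring a little care is the placement of the $t^m$ factor (left versus right) and the corresponding conjugation, i.e. being explicit that moving $b \cdot t^m$ into the form $t^m \cdot b'$ is legitimate because $A$ is $t$-invariant; this is where I would be most careful to state things correctly, but it is routine.
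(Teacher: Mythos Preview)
Your proposal is correct and follows essentially the same approach as the paper: write $g = t^m b$ using the semidirect-product normal form, then apply the preceding lemma to express $b \in A$ as $p(t).a$. The paper's proof is a two-line version of what you wrote; your added discussion of why the normal form holds and of uniqueness is correct but not needed for the statement as formulated.
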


\begin{proof} Every element $g \in L_n$ can be represented as $g = t^m b$ for some $m \in \z$ and some $b \in A$. The result now follows from Lemma \ref{repb}. 
\end{proof}

\begin{rem}\label{phomo} The Lamplighter groups $L_n$ are amenable. It thus follows from \cite[Corollary 3.9]{Amen} that all quasi-parabolic actions of the Lamplighter groups are regular. Indeed, the Busemann pseudocharacter
associated to any quasi-parabolic action of $L_n$ is the standard epimorphism to $\z$.
	\end{rem}

We will now derive the following characterization of quasi-parabolic structures on $L_n = \z_n \op{wr} \z$, which is similar to \cite[Theorem 4.1]{Amen}. 

\begin{thm}\label{whatiwant} $[X] \in \H_{qp}(L_n)$ if and only if there exists $Q \subset A$ such that the action of $t$ or $t^{-1}$ is strictly confining $Q$ into $A$ and $[X] = [\{Q, t^{\pm 1} \}]$.  
\end{thm}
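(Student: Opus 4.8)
The plan is to establish the two directions separately, with the reverse implication being essentially a restatement of Proposition \ref{niceprop} together with Remark \ref{wegetlin}. If $Q \subset A$ is such that $t$ (or $t^{-1}$) strictly confines $A$ into $Q$, then by Proposition \ref{niceprop} applied to the semidirect product decomposition $L_n = A \rtimes \langle t\rangle$, the Cayley graph $\Gamma(L_n, \{Q, t^{\pm 1}\})$ is hyperbolic and the action is regular focal; hence $[\{Q, t^{\pm 1}\}] \in \H_{qp}(L_n)$. (The only small point to check is that the hypotheses of Definition \ref{genconfine} for the abstract automorphism $\alpha$ match the reformulation in Definition \ref{gennewconfine} for conjugation by $t$ — this is immediate since conjugation acts on $A$ by the shift.)

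For the forward implication, suppose $[X] \in \H_{qp}(L_n)$. First I would invoke Remark \ref{phomo}: since $L_n$ is amenable, the action on $\Gamma(L_n, X)$ is regular, and the Busemann pseudocharacter $p$ is (proportional to) the standard epimorphism $\varepsilon\colon L_n \to \z$ sending $t \mapsto 1$ and $A \mapsto 0$. The idea is then to recover the confining subset: set $Q = \{a \in A : \text{stuff}\}$, more precisely take $Q$ to be the intersection of $A$ with a suitable sublevel set of the horofunction, i.e. roughly $Q = \{a \in A : |a|_X \le C\}$ for an appropriate constant $C$, or equivalently the preimage in $A$ of a large ball around the basepoint. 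One shows that $t$ (after possibly replacing $t$ by $t^{-1}$, according to the sign of $p(t)$) strictly confines $A$ into this $Q$: condition (b) of Definition \ref{gennewconfine} follows because every element of $A$ is conjugated into the ball by a large enough power of $t$ (this uses $p(t) \ne 0$ and the finiteness of support), condition (c) follows from the defect bound on the quasi-character and quasi-geodesic stability in the hyperbolic Cayley graph, and the strictness of $tQ \subsetneq Q$ follows because $A$ is unbounded in $\Gamma(L_n, X)$ (otherwise the structure would be lineal or elliptic, not quasi-parabolic). Finally one checks $[X] = [\{Q, t^{\pm 1}\}]$, i.e. that these generating sets are bi-Lipschitz equivalent: $\{Q, t^{\pm 1}\} \preceq X$ because $Q$ is $X$-bounded by construction and $t$ has bounded $X$-length, while $X \preceq \{Q, t^{\pm 1}\}$ requires showing every element of $X$ has bounded length in the new metric, which again uses that the confining data reconstructs the hyperbolic geometry via Proposition \ref{niceprop} and that a cobounded hyperbolic action with the same Busemann character and the same bounded-orbit subgroup gives an equivalent structure (here one would cite the relevant uniqueness/comparison results from \cite{ABO} or \cite{Amen}, along the lines of \cite[Theorem 4.1]{Amen}).

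The main obstacle I anticipate is the forward direction's bookkeeping: defining $Q$ with precisely the right radius so that all three confining conditions hold simultaneously, and then proving $X \preceq \{Q, t^{\pm 1}\}$ — that is, that the reconstructed structure is not merely dominated by $[X]$ but actually equal to it. This is where one genuinely needs the structure theory of focal actions from \cite{Amen} (the correspondence between regular focal actions of $A \rtimes \z$ and confining subsets of $A$), rather than soft arguments; I would lean on \cite[Theorem 4.1]{Amen} and adapt its proof to this setting, the key simplification being that for $L_n$ the Busemann character is pinned down to be the standard one by amenability, so there is no ambiguity in which $\z$-direction to confine along.
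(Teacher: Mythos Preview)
Your approach is essentially the same as the paper's: the reverse direction via Proposition \ref{niceprop}, and the forward direction by taking $Q = B(e, r_0) \cap A$ for a suitable radius (the paper packages the geometric input as Lemma \ref{lemma}, using the horofunction-convergence argument of \cite[Proposition 3.7]{Amen}), then checking bi-Lipschitz equivalence of $X$ and $\{Q, t^{\pm 1}\}$ exactly along the lines you sketch. One small slip: your justification for condition (b) should invoke not finiteness of support but that every element of $A$ has finite order, so the Busemann character $p$ vanishes on all of $A$ and the two quasi-geodesic rays to $t^{-\infty}$ from $1$ and from $b$ become $r_0$-close.
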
 

In general, the above characterization may not hold for a generic $G = H \op{wr} \z$. Indeed, in view of \cite[Theorem 4.1]{Amen}, a primary obstruction would be if $G/[G,G]$ contained a free abelian group of rank greater than 1. However, this obstruction is not present when $H =\z_n$. To prove the theorem, we first prove the following lemma.

\begin{lem}[cf Proposition 4.5 in \cite{Amen}]\label{lemma} Let $X$ be a generating set of $L_n$ such that $\Ga(L_n,X)$ is hyperbolic and $L_n \acts \Ga(G,X)$ is quasi-parabolic. Let $p$ be the associated Buseman character to this action, and $B(e, r)$ denote the ball of radius $r$ around the identity $e$ in $\Ga(L_n, X)$. Then $ t \notin \ker(p)$ and there exists an $r_0 > 0$ such that for all $r > 0$, there exists an $n_0$ such that for $n \geq n_0$, $t^n(B(e, r) \cap A)t^{-n} \subset B(e, r_0) \cap A$. In particular, $t$ is confining $A$ into $Q = B(e, r_0) \cap A$. 
\end{lem}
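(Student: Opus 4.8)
The plan is to exploit the fact (Remark \ref{phomo}) that the Busemann pseudocharacter $p$ of the quasi-parabolic action $L_n \acts \Ga(L_n,X)$ is (proportional to) the standard epimorphism $L_n \to \z$ sending $t \mapsto 1$ and killing $A$. The statement $t \notin \ker(p)$ is then immediate, and moreover $A = \ker(p)$, so the restriction of $p$ to $A$ is identically zero. Since $p$ is non-zero on $t$, the element $t$ acts loxodromically on $\Ga(L_n,X)$; fix its quasi-geodesic axis and orient it so that $p(t) > 0$. The conceptual content we need is the standard ``contraction'' behaviour of the horofunction: because $p$ is a homomorphism with $p|_A = 0$ and $A$ is $t$-invariant (conjugation by $t$ is just the shift), repeatedly conjugating a fixed bounded subset of $A$ by $t$ pushes it arbitrarily far in the direction of $\xi = t^{+\infty}$, and a quasi-parabolic (focal) action is ``horospherically contracting'' on the fixed point $\xi$.

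Concretely, here are the steps I would carry out. First, recall from the classification (and \cite{Amen}) that in a focal action fixing $\xi \in \partial S$ there is a constant $\lambda \ge 0$ and a bounded-geometry estimate: for $g,h \in G$, $|gh|_X$ is coarsely controlled by $|g|_X$, $|h|_X$ and the Busemann cocycle, and — crucially — conjugation by the loxodromic $t$ strictly decreases word length of elements of $\ker p$ up to additive error. More precisely, I would establish: there are constants $C, \mu$ with $0 < \mu < 1$ such that for every $b \in A$ and every $m \ge 0$,
\[
|t^m b t^{-m}|_X \;\le\; \mu^m |b|_X + C .
\]
This is the heart of the argument. It follows from the fact that $t$ is loxodromic with $p(t) \ne 0$ while $p(b) = 0$, combined with hyperbolicity: the geodesic $[e, b]_X$ lies in a bounded neighbourhood of a ``horosphere'' through $e$ centred at $\xi$, and applying $t^m$ contracts such horospheres towards $\xi$; a Milnor–Svarc / thin-triangles computation with the horofunction $p$ then gives the geometric decay. (One can also cite the internal structure theory of focal actions from \cite{Amen} directly here — this is exactly their Proposition 4.5 — so I would phrase it as ``by the argument of \cite[Proposition 4.5]{Amen}'' and reproduce the short horofunction estimate.)

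Second, given the displayed contraction estimate, the lemma is a routine deduction. Set $r_0 = C + 1$ and $Q = B(e,r_0) \cap A$. For any $r > 0$, if $b \in B(e,r) \cap A$ then $|t^m b t^{-m}|_X \le \mu^m r + C$, which is $< r_0$ as soon as $\mu^m r < 1$, i.e. for all $m \ge n_0 := \lceil \log_{\mu}(1/r) \rceil$ (using $\log \mu < 0$). Hence $t^{n}(B(e,r)\cap A)t^{-n} \subset B(e,r_0)\cap A = Q$ for all $n \ge n_0$, which is the assertion. Finally, to see that $t$ confines $A$ into $Q$ in the sense of Definition \ref{gennewconfine}: condition (a), $tQt^{-1} \subseteq Q$, follows by taking $r = r_0$ in what we just proved together with the fact that $Q$ already contains $t^n Q t^{-n}$ for large $n$ — more directly, $|tbt^{-1}|_X \le \mu|b|_X + C \le \mu r_0 + C < r_0$ when $b \in Q$; condition (b) (every $b \in A$ has $t^n b t^{-n} \in Q$ for some $n$) is the special case of the main estimate with $r = |b|_X$; condition (c), $t^{n_0}(Q+Q)t^{-n_0} \subseteq Q$ for some fixed $n_0$, follows because $Q + Q \subseteq B(e, 2r_0 + D)\cap A$ for the quasi-character defect $D$ of $|\cdot|_X$ restricted to $A$ (or simply $Q\cdot Q \subseteq B(e, 2r_0)$), so applying the estimate with $r = 2r_0 + D$ gives a single $n_0$ that works uniformly. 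Note the lemma claims only ``confining'', not ``strictly confining'', so we do not need $tQt^{-1} \subsetneq Q$ here — strictness is handled later, and in any case would follow once we know (from the quasi-parabolic, hence non-lineal, hypothesis via Remark \ref{wegetlin}) that $A$ is unbounded in $\Ga(L_n,X)$.

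The main obstacle is the contraction estimate $|t^m b t^{-m}|_X \le \mu^m|b|_X + C$. Everything else is bookkeeping. Proving it honestly requires either (i) importing the internal geometry of focal actions from \cite{Amen} — where essentially this inequality is Proposition 4.5 — or (ii) a self-contained hyperbolicity argument: pick a point $x_k$ far out along the $t$-axis toward $\xi$, use that $p(g) \approx \d_X(gs,x_k) - \d_X(s,x_k)$, note $p(t^m b t^{-m}) = 0 = p(b)$, and run a $\delta$-thin-triangle comparison between the quasi-geodesics $[s, t^m b t^{-m} s]$, $[s,x_k]$, and the $t^m$-translate of $[s, bs]$; the exponential gain $\mu^m$ comes from the standard fact that two geodesic rays to the same boundary point fellow-travel and that ``depth in a horoball'' grows linearly in $m$ while transverse distance decays exponentially. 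I would present option (i) as the clean route and sketch (ii) in a remark, since the paper is already leaning on \cite{Amen} heavily.
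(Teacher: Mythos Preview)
Your proposal rests on the exponential contraction estimate
\[
|t^{m} b t^{-m}|_{X} \;\le\; \mu^{m}|b|_{X} + C \qquad (0<\mu<1),
\]
and this is the genuine gap. In a general $\delta$-hyperbolic graph there is no such inequality: what one gets from coarse hyperbolicity is that two (quasi-)geodesic rays converging to the same boundary point are \emph{eventually} within a fixed distance $r_{0}$ of each other, with the waiting time depending on the initial separation, but with no prescribed decay rate in between. The ``transverse distance decays exponentially'' heuristic you invoke is a feature of $\mathrm{CAT}(-1)$ geometry, not of $\delta$-hyperbolicity; in an integer-valued word metric the function $m\mapsto |t^{m}bt^{-m}|_{X}$ can stay essentially constant for a number of steps comparable to $|b|_{X}$ before dropping below $r_{0}$. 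In particular neither your option (ii) sketch nor \cite[Proposition~4.5]{Amen} (which is the qualitative analogue of this lemma, not an exponential inequality) yields the displayed estimate. Note also that even granting the estimate, your verification of condition~(a) needs $\mu r_{0}+C<r_{0}$, i.e.\ $r_{0}>C/(1-\mu)$, which your choice $r_{0}=C+1$ does not ensure.

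The paper's argument avoids all of this. After observing (as you do) that $p$ is a genuine character with $p(t)\neq 0$ and $p|_{A}=0$, one notes that $t$ is loxodromic and, up to swapping $t\leftrightarrow t^{-1}$, the global fixed point is $t^{-\infty}$. Then the sequences $(t^{-m})_{m\ge 0}$ and $(bt^{-m})_{m\ge 0}$ are quasi-geodesic rays to the same boundary point, and one simply invokes the fellow-traveling statement \cite[Proposition~3.7]{Amen}: there is a constant $r_{0}$ (depending only on $\delta$ and the quasi-geodesic constants) such that $\d_{X}(t^{-m},bt^{-m})\le r_{0}$ for all $m\ge n_{0}$, with $n_{0}$ depending only on $\d_{X}(1,b)$. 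Since $\d_{X}(t^{-m},bt^{-m})=|t^{m}bt^{-m}|_{X}$, this is exactly the assertion $t^{m}(B(e,r)\cap A)t^{-m}\subset B(e,r_{0})\cap A$ for $m\ge n_{0}(r)$. No rate of contraction is needed, and the confining conditions (b) and (c) follow from this statement applied with $r=|b|_{X}$ and $r=2r_{0}$ respectively. You should replace the exponential estimate by this direct fellow-traveling argument.
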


\begin{proof} By Remark \ref{phomo}, the action is regular quasi-parabolic and thus $p$ is a character. If $p(t) =0$, then it can be shown that $p(g) =0$ for every $g \in L_n$; which contradicts the assumption that the action is quasi-parabolic. Thus $t \notin \op{ker}(p)$ and so $t$ is a loxodromic element. Consequently, the global fixed point of the boundary is either the attracting or repelling point of $t$. Up to changing $t$ with $t^{-1}$, we may assume without loss of generality that the global fixed point is the repelling point, which we denote $t^{-\infty}$. In particular, connecting consecutive points of the sequence $1,t^{-1}, t^{-2}, t^{-3},...$ by geodesics defines a quasi-geodesic path to $t^{- \infty}$. Therefore, so does the sequence $g, gt^{-1}, gt^{-2}, gt^{-3},...$ for any $g \in L_n$. 

By \cite[Proposition 3.7]{Amen}, there exists a constant $r_0$ (depending only on the hyperbolicity constant of $\Ga(L_n, X)$), such that any two quasi-geodesic rays with the same end point are eventually $r_0$ close to each other. In particular, it follows from the same proposition that if $|p(g)| \leq C$, then $d_X(t^{-n}, gt^{-n}) \leq r_0 +C$ for all $n$ larger than some $n_0$; where $n_0$ only depends on $d_X(1, g)$. 

Now every element of $A$ has finite order; thus $p(b) = 0$ for every $b \in A$. Consequently, $d_X(t^{-n} , bt^{-n}) \leq r_0$ for all $n$ larger than some $n_0$, which depends only on $d_X(1,b)$. The conclusion that $t^n (B(e, r) \cap A)t^{-n} \subset B(e,r_0) \cap A$ for all $n \geq n_0$ obviously follows; where $n_0$ depends on $r$. The assertion that $t$ is confining $A$ into $Q$ also follows from this. \end{proof} 

\begin{proof}[Proof of Theorem \ref{whatiwant}] The converse implication follows from \cite[Proposition 4.26]{Amen}. Indeed, any subset $Q \subset A$ such that the action of $t$ or $t^{-1}$ is strictly confining $A$ into $Q$ defines the (regular) quasi-parabolic structure given by $[\{Q, t^{\pm 1}\}]$. The forward implication follows from Lemma \ref{lemma}. Given $[X] \in \H_{qp}(L_n)$, we consider the action of $L_n$ on $\Ga(L_n,X)$. This gives us a subset $Q = B(e, r_0) \cap A $ of $A$ such that $t$ confines $A$ into $Q$. Set $S = \{Q, t^{\pm 1}\}$. We will show that the identity map $i \colon  (A \rtimes \langle t \rangle, d_S) \rightarrow (A \rtimes \langle t \rangle, d_X)$ is a quasi-isometry. This map is obviously surjective. Since $S$ is bounded in $d_X$, the map $i$ is Lipschitz. It thus suffices to prove that if a set is bounded in $d_X$, then it is bounded in $d_S$. In restriction to $A$, the result follows from Lemma \ref{lemma}. 

Further, since $p$ is a character and $p(t) \neq 0$; we get that $p(bt^n) = p(b) + np(t) = np(t)$. From the definition of the Buseman character, we can see that $q(g) \leq d_X(1,g)$. Since $|p(g) - q(g)| \leq D$, we get that $-d_X(1,g) -D \leq p(g) \leq d_X(1,g) +D.$  Thus $p$ maps bounded sets of $(L_n, X)$ to bounded subsets of $\mathbb{R}$ and it follows that $[X] = [S]$. If $tQ = Q$, then it follows from Remark \ref{wegetlin} that $[S]$ is a lineal structure, which is impossible. Thus $tQ$ is strictly contained in $Q$.
\end{proof}  


Since $A \cong R$ when $G= \mathbb{Z}_n$, we may prove the results in this section in terms of the elements of $R$  (which are Laurent polynomials) instead of the functions from $A$. This simplifies several notions, for example, the multiplication by $t, t^{-1}$ to elements of $R$ is akin to the multiplication of Laurent polynomials. In view of Theorem \ref{whatiwant}, we will show that any subset $Q \subset A$ such that the action of either $t$ or $t^{-1}$ is strictly confining $A$ into $Q$ must correspond to one the structures $[S_H]$ or $[S'_H]$ respectively, for some $H < G$. To show this, we will first prove the following proposition.
 
\begin{prop}\label{confzn} Let $Q \subset R$ and let the action of $t$ be strictly confining $R$ into $Q$. Let $S = \{Q, t^{\pm 1}\}$. Then there exists a subgroup $H < G$ such that $[S] = [S_H]$. \end{prop}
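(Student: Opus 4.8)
\textbf{Proof proposal for Proposition \ref{confzn}.}

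The plan is to extract the subgroup $H$ directly from the confining set $Q$ and then show the two generating sets are equivalent. First I would normalize $Q$: since $t$ strictly confines $R$ into $Q$ and multiplication by $t$ corresponds to shifting the (Laurent) polynomial, replacing $Q$ by the equivalent set $Q \cup \{0\}$ and, if necessary, enlarging $Q$ by the finitely many elements needed does not change $[S]$ (bounded modifications of a generating set give an equivalent generating set). The natural candidate for $H$ is the set of coefficients that can legally appear in ``arbitrarily negative'' positions of elements of $Q$ — more precisely, I would set $H = \{ g \in G \mid$ the constant monomial $g \cdot t^{-i} \in Q$ for all sufficiently large $i>0 \}$, or equivalently collect all values $f(-i)$ over $f \in Q$ and $i$ large; condition (c) ($t^{n_0}(Q+Q)\subset Q$) together with condition (a) forces this set to be closed under the group operation and inverses, hence a subgroup, and condition (a) being \emph{strict} is what will force $H \neq G$ (otherwise $Q$ would contain shifts witnessing every element in every negative coordinate, and one can then show $tQ=Q$, contradicting strictness — this is essentially Remark \ref{wegetlin}).

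Next I would prove $Q$ is ``sandwiched'' between bounded modifications of $Q_H$. The inclusion $Q \subseteq Q_H$ up to a bounded error: using condition (b), every $f \in R$ has some $t^n f \in Q$; using condition (a), all further shifts $t^m f$ with $m \geq n$ lie in $Q$; comparing these shifts and using the shift-invariance of the coefficient-at-$-i$ data, any coefficient appearing in a sufficiently negative position of an element of $Q$ must lie in $H$ by definition of $H$. The only elements of $Q$ violating $f(-i) \in H$ for small $i>0$ form a bounded set (there is a uniform bound on how far left this failure can occur, coming from $r_0$ in the geometric picture / from condition (c)), so $Q \subseteq Q_H \cup (\text{bounded set})$. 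Conversely $Q_H \subseteq Q$ up to bounded error: given $f \in Q_H$, write it as a sum of a ``tail'' supported on negative coordinates with values in $H$ and a ``head'' of bounded support; the tail is a sum of shifts $t^i(h_i \cdot a)$ with $h_i \in H$ and $i$ large, each of which lies in $Q$ by definition of $H$, and finitely many applications of condition (c) assemble them (with a uniformly bounded shift $t^{N n_0}$) into a single element of $Q$, while the head contributes only a bounded discrepancy.

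From the two-sided containment up to bounded sets, it follows that $S = \{Q, t^{\pm 1}\}$ and $S_H = \{Q_H, t^{\pm 1}\}$ are equivalent generating sets: each element of one has uniformly bounded length with respect to the other, so $\sup_{x \in S}|x|_{S_H} < \infty$ and vice versa, giving $[S] = [S_H]$ by Definition \ref{def-GG}. I would also remark that $H$ is a \emph{proper} subgroup: if $H = G$ then $Q_H = A = R$ is unbounded yet, being equivalent to $Q$, would have to make $t$ act with bounded shift — concretely one checks $t Q_H = Q_H$, so by Remark \ref{wegetlin} the structure would be lineal, not quasi-parabolic, contradicting the hypothesis that $t$ is \emph{strictly} confining (equivalently that $[S] \in \H_{qp}$).

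\textbf{Main obstacle.} The delicate point is quantitative control of the ``bounded error'': showing that the set of elements of $Q$ that fail the $Q_H$-membership condition at small negative indices (and symmetrically, the amount of extra shift $t^{Nn_0}$ needed to build a general element of $Q_H$ inside $Q$ via condition (c)) is bounded \emph{uniformly}, not just finite for each element. This is where conditions (a), (b), (c) must be combined carefully — in particular iterating (c) only gives $t^{kn_0}(Q + \cdots + Q) \subset Q$ for $k$-fold sums, so one must ensure the number of summands needed to realize a given $f \in Q_H$ is controlled by a quantity (like the diameter of the support of the ``head'') that stays bounded over the relevant family of test elements. Handling this correctly is exactly the ``elementary but lengthy'' commutative-algebra bookkeeping the introduction warns about; everything else is formal.
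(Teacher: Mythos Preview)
Your approach has a genuine gap at the definition of $H$. You offer two characterizations and treat them as interchangeable: (A) $H = \{g \in G : g\,t^{-i} \in Q \text{ for all large } i\}$, and (B) the set of coefficients appearing at arbitrarily negative positions in elements of $Q$. Only (A)$\subseteq$(B) is immediate; the reverse inclusion is precisely the hard content of the proposition. Your argument for $Q_H \subseteq Q$ (``each monomial lies in $Q$ by definition of $H$'') invokes (A), while your argument for $Q \subseteq Q_H$ (``any coefficient at a sufficiently negative position must lie in $H$ by definition'') invokes (B). Without establishing the equivalence, one of the two containments collapses.

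There is a clean diagnostic: your outline never uses that $G=\mathbb{Z}_n$ is \emph{cyclic}, yet Example~\ref{noneqforfin} shows the conclusion fails already for $G=\mathbb{Z}_2\times\mathbb{Z}_2$. In that example the confining set $Q$ is built from $p_i(t)=(0,1)t^{-i}+(1,0)t^{-i+1}$, so both $(0,1)$ and $(1,0)$ occur at arbitrarily negative positions (your description (B) yields all of $G$), while no isolated monomial $(0,1)t^{-i}$ or $(1,0)t^{-i}$ lies in $Q$ (your description (A) yields only the identity). The paper closes exactly this gap with an explicit extraction algorithm (its Steps~1--3 and Cases~3a--3c) that iteratively cancels neighboring terms to produce pure monomials $ht^{-i}\in Q$; Case~3a is where cyclicity enters, using that two elements of $\mathbb{Z}_n$ of the same order generate the same subgroup, so one can be used to kill the other. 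The ``main obstacle'' you flag about uniform control of the shift $t^{Nn_0}$ is real but secondary (the paper dispatches it with the inductive splitting trick of Lemma~\ref{atleast}); the monomial-extraction step is the essential missing idea.
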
 

Indeed, if this proposition is proved, then by symmetric arguments it will follow that if the action of $t^{-1}$ is strictly confining $R$ into $Q$, then there exists a subgroup $H < G$ such that $[S] = [S'_H]$. This will imply that the set of quasi-parabolic structures on $L_n$ correspond precisely to the two copies of $\mathbb{S}_G$ in $\mathcal{B}(G)$. 

For the sake of convenience, we will let $B_+$ and $B_-$ denote the subsets of $R$ that yield the quasi-parabolic structures corresponding to the trivial subgroup $\{e\}$ of $\mathbb{Z}_n$. i.e. $B_+$ denotes the set of all elements from $R$ with only non-negative powers on $t$ and $B_-$ denotes the set of all elements from $R$ with only non-positive powers on $t$. We also denote by $\widetilde{Q}_H$, the following set $$\{p \in A \mid p(-i) \in H \hspace{4pt} \forall \hspace{2pt} i \geq 1; \hspace{2pt} p(j) = e \hspace{4pt} \forall \hspace{2pt} j \geq 0\}.$$ 

The proof of Proposition \ref{confzn} will involve several steps and lengthy arguments. We provide a brief outline here for clarity: Following the notation from the statement of Proposition \ref{confzn}, we will first show that $[S] \preccurlyeq [\{B_+, t^{\pm 1}\}]$. This will allow us to assume, without loss of generality, that $B_+ \subset Q$. The next step will specify how to identify the required subgroup $H$; it will follow from the definition of $H$ that $[S_H] \preccurlyeq [S]$. The penultimate step will specify how to build all the elements of $\widetilde{Q}_H$ in $Q$ by using the conditions of Definition \ref{gennewconfine}. The last step will be to use these facts to show that $[S] \preccurlyeq [S_H]$; the equality $[S] = [S_H]$ will obviously follow. We start by introducing some terminology. 

\begin{defn} The \textit{negative degree} of $p(t) \in R$ is the smallest negative exponent on $t$ that appears in $p(t)$, if it exists. The \textit{leading negative coefficient} is the coefficient of the term with the negative degree. The \textit{degree} is the largest positive exponent on $t$ that appears in $p(t)$, if it exists; and \textit{leading coefficient} is the coefficient of this term. 
\end{defn}
For the sake of simplicity, we adopt the convention that our elements from $R$ are written such that terms from left to right have increasing powers on $t$. Thus the leading negative term appears at the extreme left and the leading term is written on the extreme right. 

\begin{ex} The negative degree of $p(t) = 2t^{-9} + 3t^{-6} + 1 + 2t + t^6 $ is $-9$  and the leading negative coefficient is $2$. The degree is $6$ and leading coefficient is 1.
\end{ex}

The following results will allow us some flexibility in choosing a representative in the equivalence class of $[S]$. This will be very useful in proving Proposition \ref{confzn}. 

\begin{lem}\label{addbdd} Let $Q \subset R$ and assume that the action of $t$ is strictly confining $R$ into  $Q$. Let $f_j, \hspace{5pt} j \geq 1$ be a collection of elements from $B_+ \backslash Q$ that satisfy the following condition: there exists a constant $K \geq 1$ such that $t^Kf_j \in Q$ for all $j \geq 1$. Let $P = \{t^if_j, \mid j \geq 1, i \geq 0 \}$ and $\overline{Q} = Q \cup P$. Then the action of $t$ is also strictly confining $R$ into $\overline{Q}$. Further, if $S = \{Q, t^{\pm 1} \}$ and $\overline{S} = \{\overline{Q}, t^{\pm 1} \}$, then $[S] =[\overline{S}]$. 
\end{lem}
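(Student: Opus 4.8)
\textbf{Proof proposal for Lemma \ref{addbdd}.}

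The plan is to verify the three confinement conditions of Definition \ref{gennewconfine} for $\overline{Q}$, and then to show $[S]=[\overline{S}]$ by producing uniform bounds in both directions. First, note that $\overline{Q} = Q \cup P$ where $P = \{t^i f_j \mid j\geq 1, i\geq 0\}$ is by construction closed under multiplication by $t$, i.e. $tP\subseteq P$. Since also $tQ\subsetneq Q$, we get $t\overline{Q} = tQ\cup tP \subseteq Q\cup P = \overline{Q}$; strictness follows because $tQ$ is already strictly contained in $Q$ (pick the same witness element in $Q\setminus tQ$ that exists by hypothesis — one must check it is not swallowed by $tP$ either, but since the $f_j\in B_+\setminus Q$ and $t^if_j$ for $i\geq 1$ all lie in... one can arrange the witness to have negative degree, which no element of $P\subseteq B_+$ can have, so it stays outside $t\overline{Q}$). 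Condition (b) is immediate: if $t^nf\in Q\subseteq\overline Q$ for some $n\geq 0$ then a fortiori $t^nf\in\overline Q$. For condition (c), we need a uniform $n_1$ with $t^{n_1}(\overline Q+\overline Q)\subseteq\overline Q$. Write $\overline Q+\overline Q \subseteq (Q+Q)\cup(Q+P)\cup(P+P)$. The first piece is handled by the original constant $n_0$ for $Q$. For the other pieces, the key is that applying $t^K$ to any $f_j$ lands in $Q$; so $t^K(P)\subseteq \bigcup_{i\geq 0} t^iQ \subseteq Q$ (using $tQ\subseteq Q$), hence $t^K(Q+P)\subseteq Q+Q$ and $t^K(P+P)\subseteq Q+Q$, and then one more application of $t^{n_0}$ lands in $Q\subseteq \overline Q$. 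Thus $n_1 = K+n_0$ works.

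Next, for the equivalence $[S]=[\overline S]$: since $Q\subseteq\overline Q$, every element of $S$ has $S$-length $\geq$ its $\overline S$-length, so trivially $\sup_{x\in S}|x|_{\overline S} = 1$, giving $[\overline S]\preccurlyeq[S]$. Wait — I want the other direction too, namely $[S]\preccurlyeq[\overline S]$, which requires $\sup_{x\in\overline S}|x|_S<\infty$. The only new generators in $\overline S$ beyond those in $S$ are the elements $t^if_j\in P$. For $i\geq K$ we have $t^if_j = t^{i-K}(t^Kf_j)$, and $t^Kf_j\in Q$, so $t^if_j$ is a product of $t^{i-K}$ (cost $\leq i-K$ in the $t^{\pm1}$ generator)... but $i-K$ is unbounded, so this naive estimate fails. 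Instead I should use the confinement structure of $Q$ directly: since $t^Kf_j\in Q$ and $tQ\subseteq Q$, in fact $t^if_j\in Q$ for \emph{all} $i\geq K$, so those elements cost exactly $1$ in $d_S$. The genuinely new generators requiring a bound are therefore only $t^if_j$ for $0\leq i< K$ — and here I claim each has $S$-length at most $2K+n_0$ or so: we can write $f_j$ (and hence $t^if_j$) as a bounded-length product involving $t^Kf_j\in Q$, powers of $t$, and the condition (c) constant. Concretely, $f_j = t^{-K}\cdot(t^Kf_j)$ does not quite work inside the group, but one can instead express $t^if_j$ using the identity in $R$ together with the fact that $t^if_j + (\text{something in }Q) \in Q$ after applying a bounded power of $t$; a careful but elementary manipulation bounds $|t^if_j|_S$ by a constant depending only on $K$ and $n_0$, uniformly in $j$ and in $0\leq i<K$.

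The main obstacle I anticipate is precisely this last uniform bound $\sup_{x\in\overline S}|x|_S<\infty$ — specifically controlling the finitely many "transitional" generators $t^if_j$ with $0\leq i<K$. One clean way to do it: observe $t^Kf_j\in Q$, and we want to recover $t^if_j$ for $i<K$. In the group $L_n$ (or $A\rtimes\langle t\rangle$), multiplication by $t^{-1}$ is available as a generator, so $t^if_j = t^{-(K-i)}(t^Kf_j)(t^{K-i})$ — wait, that's wrong since the action is by the base being abelian and $t$ acting by shift; actually $t^if_j$ as an element of $A$ equals $t^{i-K}\cdot(t^Kf_j)$ under the $R$-module structure, but as a group element we'd write it as conjugation... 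The correct observation is that $t^if_j$, viewed in $A\rtimes\langle t\rangle$, is literally the element $t^Kf_j$ left-multiplied by $t^{i-K}$ and right-multiplied by $t^{K-i}$ (a conjugate), so $|t^if_j|_S \leq 2(K-i) + |t^Kf_j|_S \leq 2K + 1$. This is bounded uniformly, completing the argument. The symmetric statement about $\psi'$ / the $t^{-1}$ case is entirely analogous and needs no separate proof. I expect the write-up to be short once this conjugation identity is stated cleanly.
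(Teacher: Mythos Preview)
Your overall strategy matches the paper's: verify conditions (a)--(c) of Definition \ref{gennewconfine} for $\overline{Q}$, then bound word lengths in both directions to get $[S]=[\overline{S}]$. Condition (b), condition (c) with constant $n_0+K$, and the equivalence $[S]=[\overline{S}]$ via the conjugation bound $|t^if_j|_S \leq 2(K-i)+1 \leq 2K+1$ are all handled correctly and essentially as in the paper (indeed your explicit conjugation bound is cleaner than the paper's terse claim).

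There is, however, a genuine gap in your strictness argument for condition (a). You want a witness $w\in \overline{Q}\setminus t\overline{Q}$, and you propose to take $w\in Q\setminus tQ$ and then ``arrange the witness to have negative degree'' so that $w\notin tP\subset B_+$. But you give no reason why a witness in $Q\setminus tQ$ with a negative-degree term must exist; the hypothesis only guarantees \emph{some} element of $Q\setminus tQ$, and there is nothing preventing all such elements from lying in $B_+$. The paper avoids this by working on the $P$-side instead: since each $f_j\in B_+$, the degrees $\deg f_j$ are nonnegative integers, so there is some $f_{j_0}$ of minimum degree. Then $f_{j_0}\in\overline{Q}$, and $f_{j_0}\notin t\overline{Q}=tQ\cup tP$: it is not in $tQ$ because $tQ\subset Q$ while $f_{j_0}\notin Q$ by hypothesis; and it is not in $tP$ because any element of $tP$ has the form $t^{i+1}f_k$ with $i\geq 0$, hence degree $\deg f_k + i + 1 > \deg f_k \geq \deg f_{j_0}$. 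Replace your negative-degree heuristic with this minimum-degree argument and the proof is complete.
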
  
\begin{proof} We will verify that $\overline{Q}$ satisfies the conditions of Definition \ref{gennewconfine}. Observe that $tP \subset P$, and so $t\overline{Q} = t(Q \cup P) = tQ \cup tP \subset Q \cup P = \overline{Q}$. To show the containment is strict, observe that since the elements $f_j \in B_+$, there exists a $j_0$ such that $f_{j_0}$ has minimum degree. Then $f_{j_0} \in \overline{Q}$ but $f_{j_0} \notin t\overline{Q}$. Indeed, if $f_{j_0} \in t\overline{Q}$, then either $f_{j_0}$ does not have minimum degree or $f_{j_0} \in Q$. Both of these are impossible by the hypothesis. This proves condition (a) of Definition \ref{gennewconfine}.

Condition (b) of Definition \ref{gennewconfine} is obviously satisfied since $Q \subset \overline{Q}$.  It remains to prove condition (c). Let $m_0 = n_0 +K$, where $n_0$ satisfies condition (c) for $Q$, and $K$ is the constant from the statement of the lemma. It is easy to verify that $t^{m_0}( \overline{Q} + \overline{Q}) \in Q \subset \overline{Q}$. Thus the first claim holds.  

Lastly, $\displaystyle \op{sup}_{s \in S} \hspace{1pt} |s|_{\overline{S}} =1$ since $S \subset \overline{S}$. By the hypothesis,  $\displaystyle \op{sup}_{s' \in \overline{S}} \hspace{2pt} |s'|_S \leq \op{max} \{1, K+ 1 \} < \infty$. Thus $[S] = [\overline{S}]$. 
\end{proof} 

\begin{cor}\label{addfin} Let $Q \subset R$ and assume that the action of $t$ is strictly confining $R$ into $Q$. Let $f_j, \hspace{5pt} 1 \leq j \leq m$ be a finite collection of elements from $B_+ \backslash Q$. Let $P = \{t^if_j, \mid 1 \leq j \leq m, i \geq 0 \}$ and $\overline{Q} = Q \cup P$. Then the action of $t$ is also strictly confining $R$ into $\overline{Q}$. Further, if $S = \{Q, t^{\pm 1} \}$ and $\overline{S} = \{\overline{Q}, t^{\pm 1} \}$, then $[S] =[\overline{S}]$. 
\end{cor}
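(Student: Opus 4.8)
The plan is to deduce Corollary \ref{addfin} directly from Lemma \ref{addbdd}, since the corollary is just the special case where the collection $\{f_j\}$ is finite rather than countably infinite. The only thing that needs checking is that a finite collection automatically satisfies the uniform-boundedness hypothesis of Lemma \ref{addbdd}, namely that there is a single constant $K \geq 1$ with $t^K f_j \in Q$ for all $j$.

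First I would invoke condition (b) of Definition \ref{gennewconfine} for the set $Q$: since the action of $t$ is (strictly) confining $R$ into $Q$, for each $j$ with $1 \leq j \leq m$ there exists $K_j \geq 0$ such that $t^{K_j} f_j \in Q$. Because we only have finitely many indices $j$, I can set $K = \max\{1, K_1, \ldots, K_m\}$, which is a well-defined finite constant. Then for each $j$ we have $t^K f_j = t^{K - K_j}(t^{K_j} f_j)$, and since $t^{K_j} f_j \in Q$ and $tQ \subseteq Q$ (condition (a)), iterating the inclusion $K - K_j \geq 0$ times gives $t^K f_j \in Q$. Thus the hypothesis of Lemma \ref{addbdd} is satisfied with this $K$.

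Next I would simply apply Lemma \ref{addbdd} to the collection $\{f_j : 1 \leq j \leq m\}$ (a finite collection is in particular a collection indexed by $j \geq 1$, or one may pad it out by repetition, but it is cleanest just to note the lemma's proof never used infiniteness of the index set). The lemma then yields directly that the action of $t$ is strictly confining $R$ into $\overline{Q} = Q \cup P$, where $P = \{t^i f_j : 1 \leq j \leq m,\ i \geq 0\}$, and that $[S] = [\overline{S}]$ for $S = \{Q, t^{\pm 1}\}$ and $\overline{S} = \{\overline{Q}, t^{\pm 1}\}$. This completes the proof.

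I do not anticipate any real obstacle here: the entire content is the observation that a maximum of finitely many numbers is finite, which upgrades the pointwise conclusion of Definition \ref{gennewconfine}(b) to the uniform hypothesis required by Lemma \ref{addbdd}. The only mild subtlety worth a sentence in the writeup is making sure $K \geq 1$ (so that $K + 1 \geq 2$ and the bound $\max\{1, K+1\}$ in the last line of Lemma \ref{addbdd}'s proof still applies verbatim), which is why I take the maximum with $1$ as well.
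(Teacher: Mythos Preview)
Your proof is correct and follows essentially the same approach as the paper: invoke condition (b) to get individual exponents $k_j$, take their maximum to obtain a uniform $K$, use condition (a) to see $t^K f_j \in Q$ for all $j$, and then apply Lemma \ref{addbdd}. The only cosmetic difference is that the paper observes each $k_j \geq 1$ directly (since $f_j \notin Q$ forces $k_j \neq 0$), whereas you take the maximum with $1$; either way the argument is the same.
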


\begin{proof} For each $j$, there exists a $k_j \geq 1$ such that $t^{k_j}f_j \in Q$, by using condition (b) of Definition \ref{gennewconfine}. Take $K = \op{max} \{k_j \mid 1 \leq j \leq m \}$. Then $t^K f_j \in Q$ for all $1 \leq j \leq m$, by condition (a) of Definition \ref{gennewconfine}.  The result now follows from Lemma \ref{addbdd} with this $K$.
\end{proof}

We are now ready to prove the following result, which is the first step in the proof of Proposition \ref{confzn}.

\begin{lem}\label{atleast} Let $Q \subset R$ and let the action of $t$ be confining $R$ into $Q$. Let $S = \{Q, t^{\pm 1} \}$. Then $[S] \preccurlyeq [\{B_+, t^{\pm 1}\}]$. 
\end{lem}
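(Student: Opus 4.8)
The plan is to unwind the definition of $\preccurlyeq$: the assertion $[S]\preccurlyeq[\{B_+,t^{\pm1}\}]$ is equivalent to $\{Q,t^{\pm1}\}\preceq\{B_+,t^{\pm1}\}$, i.e. to $\op{sup}_{y\in\{B_+,t^{\pm1}\}}|y|_S<\infty$; since $|t^{\pm1}|_S=1$ this reduces to producing a single constant $C\ge 0$ with $|b|_S\le 2C+1$ for every $b\in B_+$. I would obtain this from the stronger claim that $t^Cb\in Q$ for all $b\in B_+$, with $C$ independent of $b$: granting this, $b=t^{-C}\,(t^Cb)\,t^{C}$ in $L_n$ (reading the shift as conjugation) exhibits $b$ as a word of length $\le 2C+1$ over $S$, since $t^Cb\in Q$. (If $tQ=Q$ the statement is trivial: then $R=Q$ by Remark \ref{wegetlin}, so $B_+\subset Q$ and $|b|_S\le 1$.)

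To control how far an element must be pushed up to land in $Q$, I would introduce $\mu(f):=\inf\{k\in\z:t^kf\in Q\}$, which by conditions (a) and (b) of Definition \ref{gennewconfine} is a well-defined element of $\z\cup\{-\infty\}$ (the set $\{k:t^kf\in Q\}$ being upward closed) and satisfies: (i) $\mu(t^kf)=\mu(f)-k$; (ii) $f\in Q$ iff $\mu(f)\le 0$; and, crucially, (iii) $\mu(f+g)\le\max(\mu(f),\mu(g))+n_0$, the last obtained by shifting $f$ and $g$ to a common level $m\ge\max(\mu(f),\mu(g))$ where both lie in $Q$ and applying condition (c) to get $t^{n_0+m}(f+g)\in Q$. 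Since there are only finitely many monomials $c\in\{1,\dots,n-1\}$, the number $M:=\max_c\mu(c)$ is finite, so (i) gives $\mu(ct^i)\le M-i\le M$ for every coefficient $c$ and every $i\ge 0$ (with $\mu$ of the zero monomial equal to $-\infty$). The goal then becomes: bound $\mu$ uniformly on $B_+$.

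This last step is the one genuinely delicate point, and the place where a careless argument fails: combining the (up to $\deg b+1$) monomials of $b\in B_+$ naively — one at a time, or along a balanced binary tree — costs an extra $n_0$ per combination and only bounds $\mu(b)$ by a quantity growing in $\deg b$ (linearly, or like $n_0\log_2\deg b$), which is not uniform. The remedy is to peel off blocks of length exactly $n_0$. Set $L:=\max(n_0,1)$; any $b\in B_+$ with $\deg b\ge L$ factors as $b=b_0+t^Lb_1$ with $\deg b_0<L$ and $b_1\in B_+$, $\deg b_1<\deg b$, so by (iii) $\mu(b)\le\max\big(\mu(b_0),\,\mu(b_1)-L\big)+n_0$. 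An induction on $\deg b$ closes the estimate: for $\deg b<L$ the at most $L$ monomials combined sequentially give $\mu(b)\le M+(L-1)n_0=:\mathcal B_0$, and with $\mathcal C:=\mathcal B_0+n_0$ one checks $\max(\mathcal B_0,\mathcal C-L)+n_0=\mathcal C$ precisely because $L\ge n_0$, whence $\mu(b)\le\mathcal C$ for all $b\in B_+$. Taking $C:=\max(0,\mathcal C)$ yields $t^CB_+\subseteq Q$ by (i)--(ii), hence $\op{sup}_{b\in B_+}|b|_S\le 2C+1$, which is exactly what was required. (Equivalently, one can extract the same uniform bound from Kraft's inequality by placing the $i$-th monomial at depth $\lceil i/n_0\rceil+D_0$ in a binary combination tree for a suitable constant $D_0$; the peeling argument above is just the cleaner packaging of this.)
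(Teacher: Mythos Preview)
Your proof is correct, but your packaging differs from the paper's in a way worth noting. Both arguments rest on the same combinatorial observation: the cost $n_0$ of one application of condition (c) is exactly recovered by the shift in $t^{n_0}$, so grouping the monomials of $b\in B_+$ into blocks of width $n_0$ yields a uniform bound. You make this explicit via the depth function $\mu$ and the recursion $\mu(b)\le\max(\mu(b_0),\mu(b_1)-L)+n_0$, inducting on degree; the paper instead inducts on the \emph{number of terms}, using pigeonhole (the first $n_0$ terms force the remaining tail to have smallest exponent at least $n_0$), and along the way repeatedly invokes Lemma~\ref{addbdd} and Corollary~\ref{addfin} to replace $Q$ by an equivalent $\overline{Q}$ literally containing $B_+$.

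Your route is cleaner and self-contained: it never leaves the original $Q$, avoids the auxiliary results on equivalent generating sets, and gives an explicit constant $C$. The paper's route, by contrast, produces the stronger conclusion that (up to equivalence) one may assume $B_+\subset Q$, which is not merely cosmetic---it is used freely in the subsequent algorithm for building $\widetilde{Q}_H$ inside $Q$. So if you intend to follow the rest of the paper's argument, you would still need to observe (as an easy consequence of your bound $t^C B_+\subseteq Q$ together with Lemma~\ref{addbdd}) that one can enlarge $Q$ to contain $B_+$ without changing $[S]$.
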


\begin{proof} We prove the result by showing that $Q$ contains all elements from $B_+$. First observe that by Corollary \ref{addfin}, we may assume without loss of generality that  the constant elements $\{0,1,..., n-1\} \subset Q$. Consequently, by using condition (a) of Definition \ref{gennewconfine}, $$\displaystyle \{1,t,t^2,...,t^i,...\} \cup \{2, 2t, 2t^2,...\}\cup...\cup\{ (n-1), (n-1)t, (n-1)t^2,...\} \subset Q.$$ i.e. $Q$ contains all elements from $B_+$ that contain only one term. 

For any $0 \leq j< i$ and any $r,s \in \zn$ , \hspace{2pt} $rt^j + st^i \in Q +Q$. By using condition (c) of Definition \ref{gennewconfine}, we get that $rt^{n_0 + j} + st^{n_0 + i} \in Q$. i.e. $Q$ contains all elements from $B_+$ with $2$ terms and such that the smallest exponent on $t$ is $n_0$. 

Using Lemma \ref{addbdd}, we may further assume that $Q$ contains all elements from $B_+$ with exactly $2$ terms. Indeed,  this is a consequence of the conclusion of the previous paragraph and the observation that every element $p(t) \in B_+ \backslash Q$ with $2$ terms satisfies $t^{n_0}p(t) \in Q$. By iterating the above steps finitely many times, we can conclude that $Q$ contains all elements from $B_+$ with at most $n_0$ terms. 

It now suffices to prove the following claim: For any $n \geq n_0$, $Q$ contains all elements from $B_+$ with $n$ terms such that the smallest exponent on $t$ is bigger than or equal to $n_0$. Indeed, if the claim is proven, then every element $p(t) \in B_+ \backslash Q$ with more than $n_0$ terms will satisfy $t^{n_0}p(t) \in Q$. Since $Q$ contains all elements from $B_+$ with at most $n_0$ terms already, it will follow from Lemma \ref{addbdd} that $Q$ contains all elements from $B_+$. 

To prove the claim, we will use induction on $n \geq n_0$. Observe that the base of the induction already holds. Assume that the claim is true for all integers $n$ such that $n_0 \leq n \leq k$. i.e. $Q$ contains all elements from $B_+$ with at most $k$ terms such that the smallest exponent on $t$ is $n_0$. Let $$p(t) =  r_1t^{n_0 +i_1} + r_2t^{n_0+ i_2} +... r_{n_0} t^{i_{n_0}+n_0}+...+r_{k+1}t^{n_0+ i_{k+1}},$$ where $0 \leq i_1 < i_2 <...i_{n_0} < ...<i_{k+1}$ is a sequence of non-negative integers and $r_1, r_2,..., r_{k+1}$ is a list of coefficients from $\zn$.

Then $r_1t^{i_1} + r_2t^{i_2} +...+ r_{n_0}t^{i_{n_0}} \in Q$ since this element has $n_0$ terms. We claim that $i_{n_0 + 1} \geq n_0$. Indeed if $i_{n_0 + 1} < n_0$, then we cannot choose $n_0$ non-negative integers less than $n_0 - 1$. Thus $r_{n_0 +1}t^{i_{n_0 +1}} + ... + r_{k+1}t^{i_{k+1}}$ is an element such that the smallest exponent on $t$ is bigger than or equal to $n_0$. This element has $k+1 -n_0 \leq k$ terms. If $k+1 -n_0 < n_0$, this element is in $Q$. If $k+1 -n_0 > n_0$, then this element is in $Q$ by the induction hypothesis. 

Thus $$r_1t^{i_1} + r_2t^{i_2} +...+ r_{k+1}t^{i_{k+1}} \in Q + Q.$$ Using condition (c), we get that $p(t) \in Q$, which completes the proof of the proposition.  \end{proof} 

We now describe how to identify the subgroup $H < G$ given $Q \subset R$ such that the action of $t$ is confining $R$ into $Q$. Set \begin{center} $H = \langle$ All coefficients that appear with arbitrarily small negative powers on t in the elements from $Q \rangle.$ 
\end{center} 

\begin{ex} If $G =\z_8$, and $Q$ only contains the elements $4t^{-i} + 2t^{-i +1} + t^{-1}  + b(t)$, where $ i \geq 2, \hspace{2pt} b(t) \in B_+$, then $H = \langle 0, 4, 2 \rangle = \{0,2,4,6\} < \z_8.$ \end{ex} 

We claim that the subgroup $H$ defined above satisfies $[S] = [S_H]$. To prove this claim, we first establish the following inequality.  

\begin{lem}\label{ineqone} $[S_H] \preccurlyeq [S]$.
\end{lem}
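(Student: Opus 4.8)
\textbf{Proof plan for Lemma \ref{ineqone}.} Recall the setup: $Q \subset R$, the action of $t$ is (strictly) confining $R$ into $Q$, $S = \{Q, t^{\pm 1}\}$, and $H = \langle \text{all coefficients appearing with arbitrarily small negative powers of } t \text{ in elements of } Q \rangle$. The goal is $[S_H] \preccurlyeq [S]$, which by Definition \ref{def-GG} amounts to showing $\sup_{x \in S_H} |x|_S < \infty$. Since $t^{\pm 1} \in S$, it suffices to bound $|f|_S$ uniformly over all $f \in Q_H$, where $Q_H = \{f \in R \mid f(-i) \in H \ \forall i > 0\}$; moreover, because multiplication by $t$ is an isometry of $(R, d_S)$ that fixes $t^{\pm 1}$ and shifts $Q_H$-type conditions, it is enough to bound $|f|_S$ for $f \in \widetilde{Q}_H = \{p \in R \mid p(-i) \in H \ \forall i \geq 1,\ p(j) = e \ \forall j \geq 0\}$ and then separately absorb the (bounded-cost, by Lemma \ref{atleast}) contribution of the $B_+$-part via $S \succcurlyeq [\{B_+, t^{\pm 1}\}]$ — wait, more carefully: Lemma \ref{atleast} gives $[S]\preccurlyeq[\{B_+,t^{\pm1}\}]$, i.e. $B_+$ is a \emph{bounded} subset of $(R,d_S)$, so any element of $Q_H$ can be written as (element of $\widetilde Q_H$) $+$ (element of $B_+$) and it suffices to uniformly bound $|f|_S$ for $f \in \widetilde{Q}_H$.

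\textbf{Main step.} The heart of the argument is: every $f \in \widetilde{Q}_H$ lies within bounded $d_S$-distance of the identity. First I would unwind what $H$ being \emph{finitely generated by coefficients-at-arbitrarily-small-powers} buys us. For each generator $h$ of $H$ there is, by definition, a sequence $q_k \in Q$ whose leading negative coefficient is $h$ and whose negative degree tends to $-\infty$; applying condition (b) of Definition \ref{gennewconfine} and the strict-confinement structure, I want to extract from the $q_k$'s a \emph{single uniform constant} $C$ such that for every $h \in H$ and every $i \geq 1$, the monomial-type element $h t^{-i}$ (padded appropriately to lie in $Q$ after one application, or reachable from $Q$ in $\leq C$ steps of multiplication by $t^{\pm 1}$ and one $Q$-element) has $|h t^{-i}|_S \leq C$. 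This is where the argument resembles the ``building elements in $Q$'' philosophy used in Lemma \ref{atleast}: use condition (c), $t^{n_0}(Q+Q) \subset Q$, to add together finitely many such monomials while keeping the $d_S$-cost controlled by $n_0$ times the number of additions. Since an arbitrary $f \in \widetilde{Q}_H$ has support of \emph{unbounded} size, I cannot naively sum term-by-term; instead I would group the terms of $f$ and run an induction (as in the claim at the end of Lemma \ref{atleast}) showing that $Q$ itself already contains all elements of $\widetilde{Q}_H$ with smallest negative exponent below some threshold and with support divided among $H$-coefficients, so that only a bounded ``tail'' of $f$ costs anything, and that tail costs at most a constant.

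\textbf{The expected obstacle.} The main difficulty is the \emph{uniformity} of the constant: a priori, the elements of $Q$ witnessing that coefficient $h$ appears at small negative powers could have enormous, wildly varying ``shapes'' (many intermediate terms, large positive parts), and multiplying two such elements and applying $t^{n_0}$ need not land them back in $Q$ in a way whose $d_S$-cost is controlled independently of $h$ and of the negative degree. Overcoming this requires carefully exploiting (i) that $H$ is finitely generated, so only finitely many witnessing sequences are needed, (ii) that the positive parts can be absorbed for free by Lemma \ref{atleast}, and (iii) that condition (c) with the fixed $n_0$ lets additions be performed at fixed cost. Concretely, I would prove an intermediate claim of the form: there is a constant $C_0$ so that for all $i \geq 1$ and all $h \in H$, the element $h t^{-i} + (\text{bounded }B_+\text{ padding})$ lies in $Q$; then show $\widetilde{Q}_H$ is ``generated from these at bounded cost'' by the same inductive grouping as in Lemma \ref{atleast}, concluding $\sup_{f \in \widetilde{Q}_H} |f|_S < \infty$ and hence $[S_H] \preccurlyeq [S]$.
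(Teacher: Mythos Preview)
You have reversed the direction of the inequality. By Definition~\ref{def-GG}, $X \preceq Y$ means $\sup_{y \in Y} |y|_X < \infty$; hence $[S_H] \preccurlyeq [S]$ requires showing $\sup_{s \in S} |s|_{S_H} < \infty$, i.e.\ that every element of $Q$ has uniformly bounded $S_H$-length. Your proposal instead attempts to bound $\sup_{x \in S_H} |x|_S$, which is the condition for $[S] \preccurlyeq [S_H]$ --- exactly the \emph{opposite} inequality, proved separately (and with considerably more effort) in the paper's subsequent lemma.

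The paper's actual proof of Lemma~\ref{ineqone} is very short and uses the definition of $H$ directly. Any coefficient $k \in \mathbb{Z}_n$ that does \emph{not} appear at arbitrarily small negative powers in elements of $Q$ has, by definition, a bound $M_k$ on how negative an exponent it can accompany. Setting $M = \max_k M_k$ (a finite maximum since $\mathbb{Z}_n$ is finite), one sees that for every $q \in Q$ the shifted element $t^M q$ has only generators of $H$ --- hence elements of $H$ --- as coefficients at negative powers, so $t^M q \in Q_H$. Writing $q = t^{-M}(t^M q)t^{M}$ gives $|q|_{S_H} \leq 2M+1$, and the inequality follows. No inductive ``building'' argument, no use of condition~(c), and no appeal to Lemma~\ref{atleast} is needed here.

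What you sketched --- extracting monomials $h t^{-i}$ with $h \in H$ from witnessing sequences, then assembling $\widetilde{Q}_H$ inside $Q$ at bounded $S$-cost --- is essentially the strategy the paper uses for the reverse inequality $[S] \preccurlyeq [S_H]$. There it is genuinely delicate (your ``expected obstacle'' about uniformity is real), and the paper devotes a multi-step algorithm (Steps~1--3 with Cases~3a--3c) to it. So your outline is not wrong in spirit, just attached to the wrong lemma.
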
 

\begin{proof} For every $k \in \z_n$ which does not appear as one of the generators of $H$, there exists $M_k \geq 0$ such that $t^{-M_k}$ is the most negative degree with which $k$ appears as a coefficient. Take $M = \op{max}_{k} \hspace{1pt} \{M_k\}$; this is a maximum over a finite set and hence exists. It is easy to verify that $t^Mq \in Q_H$ for every $q \in Q$. Thus $\op{sup}_{q \in Q} \hspace{1pt} |q|_{S_H} \leq M+ 1$, and so $[S_H] \preccurlyeq [S]$. 
\end{proof} 

In order to prove the reverse inequality, we will specify an algorithm that will allow us to build in $Q$ all the elements of $\widetilde{Q}_H$. We would like to mention that this process will not involve the use of passing to an equivalent generating set by Lemma \ref{addbdd}; instead it will only use the conditions of Definition \ref{gennewconfine} to build new elements in $Q$. We make some observations here that will simplify this process. 
\begin{itemize} 

\item[(1)] If $H$ is trivial, then it follows that the elements in $Q$ have a uniformly bounded negative degree. Indeed, there must exist and $M$ such that $t^MQ \subset B_+$ and so $[\{B_+ , t^{\pm 1}\}] \preccurlyeq [S]$. It then follows from Lemma \ref{atleast} that $[S] = [\{B_+, t^{\pm 1}\} ] = [S_{\{e\}}]$, and so the claim holds in this case. Thus, we may assume henceforth that $H$ is non-trivial. Consequently, we may assume that the elements of $Q$ have unbounded negative degrees. In particular, this means that $Q$ has infinitely many polynomials in addition to $B_+$. 

\item[(2)]  By Lemma \ref{atleast} we may assume without loss of generality that $B_+ \subset Q$. Since every element of $\z_n$ has finite order, every element of $R$ has an additive inverse. By adding appropriate elements from $B_+$ to any element of $Q \backslash B_+$ and using condition (c) of Definition \ref{gennewconfine}, we can reduce to an element of $B_-$. In particular, we can assume that the elements we are working with when building $\widetilde{Q}_H$ in $Q$ are elements from $B_-$. Note that this process does not alter the subgroup $H$ since the coefficients that generate the subgroup occur with arbitrarily small negative powers on $t$. Thus all these coefficients still appear after this step has been executed. 

\item[(3)] For any element of the form $ r_1t^{-j_1} + r_{2}t^{-j_2} + ... +r_{k}t^{-j_k} \in B_-$, we will refer to the following process as \textit{recovering} an element: Using condition (a) of Definition \ref{gennewconfine}, multiply the element by $t^{j_2}$ to obtain $$r_1t^{-j_1 + j_2} + b(t) \in Q,$$ where $b(t) \in B_+$. Since $B_+ \subset Q$, by adding the inverse of $b(t)$ to the above, we get that $r_1t^{-j_1 +j_2} \in Q+ Q$. Using condition (c) of Definition \ref{gennewconfine}, we \textit{recover} $r_1t^{-j_1 +j_2+ n_0} \in Q$. 

\end{itemize} 

We are now ready to describe the required algorithm. Note that we denote the order of an element $g \in \z_n$ by $O(g)$. 

\textbf{Step 1.} Since $\z_n$ is finite and the negative degrees of the elements in $Q$ are unbounded, there must exist a coefficient $g_1 \in \z_n$ and an infinite sequence of elements $\{p_j\}$ of $B_- \subset Q$ such that the first term of each $p_j$ is of the form $g_1t^{-j_1}$ and $-j_1 \rightarrow -\infty$ as $j \rightarrow \infty$. 

If infinitely many of the elements $p_j$ contain only one term, then we immediately get that $\{g_1 t^{-i} \mid i \geq 1 \}$ in $Q$, by using condition (a) of Definition \ref{gennewconfine}. If not, then there exists a coefficient $g_2 \in \z_n$ and an infinite subsequence $\{q_j\} \subset \{p_j\}$, such that each $q_j$ has the form $$g_1 t^{-j_1} + g_2 t^{-j_2} + ...\hspace{50pt}(*)$$ 

Such a $g_2$ exists because $\z_n$ is finite and the collection $\{ p_j\}$ is infinite. Note that $g_1$ may equal $g_2$. 

\textbf{Step 2.} If the difference $j_1 - j_2$ takes arbitrarily large positive values as $j$ varies for the elements $\{q_j\}$, then the values of $-j_1 + j_2 + n_0$ also take arbitrary small negative values. In this case, by using the process  of recovering elements described above and condition (a) of Definition \ref{gennewconfine}, we can conclude that $$\{g_1t^{-i} \mid  i \geq 0\} \in Q.$$

\textbf{Step 3.} If the difference $j_1 -j_2$ is bounded for the elements $\{q_j\}$, then we proceed to the following cases. Observe that in this case, $-j_2  \rightarrow  -\infty$ also.  Let $O(g_1) = n$ and $O(g_2) =m$.

\textbf{Case 3a.} $n=m$. Then $g_1$ and $g_2$ generate the same cyclic subgroup of $\z_n$, so that $(g_1)^l = (g_2)^{-1}$ for some $l \geq 1$. 
Multiply each element $q_j$ (which has the form (*)) by $t^{j_1 -j_2}$ to obtain the following element in $Q$, by condition (a) of Definition \ref{gennewconfine}: $$g_1t^{j_2} + g_2t^{j_1 -2j_2}+... \hspace{50pt} (\#)$$

Adding $(*)$ and $(\#)$, we get the following element in $Q +Q$:$$g_1t^{-t_1} + g_1g_2t^{-j_2}+...$$ 

By using condition (c) of Definition \ref{gennewconfine}, we get that $$g_1t^{-t_1 + n_0} + g_1g_2t^{-j_2+n_0}+... \in Q$$

By repeating the above steps $l-1$ additional times, we eliminate the second term since $(g_1)^lg_2 =e$ and hence we get new elements of the form $$g_1 t^{j'_1} + g'_{j_3}t^{j'_3} + ... \in Q.$$ Observe that since $-j_1$ gets arbitrarily negative, for all sufficiently large $j$, the exponent $j'_1$ is also negative for all sufficiently large $j$. Further the difference between the exponents on $t$ associated to the term with the leading negative co-efficient and the immediate next term has been increased. i.e. $-j'_1 + j'_3 > j_1 -j_2$. Return to Step 1 with the same choice of $g_1$ and, if needed, choose an appropriate $g_2$ from these new elements. 

\textbf{Case 3b.} $n > m$. In this case, we repeatedly use condition (c) $m-1$ times to add each element $q_j$ to itself $m-1$ time. By doing so, we eliminate the term $g_2t^{-j_2}$ and get new elements of the form $$(g_1)^m t^{j'_1}+ g'_{j_3}t^{j'_3} +... \in Q.$$ 

Observe that since $-j_1$ gets arbitrarily negative, for all sufficiently large $j$, the exponent $j'_1$ is also negative for all sufficiently large j. Further the difference between the exponents on $t$ associated to the term with the leading negative co-efficient and the immediate next term has been increased. i.e. $-j'_1 + j'_3 > j_1 - j_2$. Return to Step 1 with the choice $g_1 = (g_1)^m$ and, if needed, choose an appropriate $g_2$ from these new elements. 

\textbf{Case 3c.} $n < m$. In this case, we repeatedly use condition (c) $n-1$ times to add each of these elements $(*)$  to itself $n-1$ times. By doing so, we eliminate the leading term $g_1t^{-j_1}$ and get elements of the form $$(g_2)^n t^{j'_2}+  g'_{j_3}t^{j'_3} +... \in Q.$$ Observe that since $-j_2$ gets arbitrarily negative, for all sufficiently large $j$, the exponent $j'_2$ is also negative for all sufficiently large $j$. Return to Step 1 with the choice $g_1 = (g_2)^n$ and, and, if needed, choose an appropriate $g_2$ from these new elements. 

The above process will eventually recover elements of the form $$\{ht^{-i} \mid i \geq 1\}$$ in Q, for some $h \in H$. This conclusion follows from the following observations: since $\z_n$ is finite, we have finite choices for coefficients. However, since $Q$ contains infinitely many elements from $B_-$ with unbounded negative degrees, there must be a fixed sequence of coefficients that appear infinitely many times in the elements of $Q$ (possibly with a growing number of terms). The above steps applied to this infinite collection of elements will recover elements with the same coefficients in a specific sequence (which are likely to be different from the original sequence of coefficients). 

Since $(g_i)^n = 1$ for all $i$, we cannot have an infinite sequence of coefficients that decrease in order. Further $O(g^k) = \dfrac{O(g)}{\op{gcd(}k, O(g))}$, so that the orders of elements are reducing in Step 3. This means that Case 3c cannot occur infinitely many times in succession, and we eventually have to apply Cases 3a or 3b. But then the difference between the exponents on $t$ associated to the term with the leading negative co-efficient and the immediate next term increase. This allows us to recover elements with the same coefficients, but with larger negative powers on $t$. Consequently, by using condition (a) of Definition \ref{gennewconfine}, this process results in recovering the elements $\{ht^{-i} \mid i \geq 1\} \subset Q$ for some $h \in H$. 

If $O(h) = |H|$, then we are done. Indeed, we can use condition (c) of Definition \ref{gennewconfine} repeatedly to create all elements $\{ h't^{-i} \mid i \geq 1, h' \in H\}$. To then build $\widetilde{Q}_H$ in $Q$, we use induction on the number of terms in elements of $\widetilde{Q}_H$ and condition (c) of Definition \ref{gennewconfine}.

If $O(h) < |H|$, then we use condition (c) of Definition \ref{gennewconfine} to eliminate all  coefficients from $\langle h \rangle$ from the elements we have thus far in $Q$. Further, we can use the same process to create new coefficients in our elements by combining the existing coefficients with powers of $h$. Now go back to Step 1 and repeat the process. This process must in turn also terminate since $|H| < \infty$ and there exists an element $k \in H$ such that $O(k) = |H|$. 
Thus, we get that $\widetilde{Q}_H \subset Q$. 

\begin{lem} $[S] \preccurlyeq [S_H]$. Hence $[S] = [S_H]$. \end{lem}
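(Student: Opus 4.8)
The goal is to establish $[S]\preccurlyeq [S_H]$, which combined with Lemma \ref{ineqone} gives $[S]=[S_H]$. By definition of the order, it suffices to show that $\sup_{x\in S_H}|x|_S<\infty$, i.e. that the generators $Q_H$ and $t^{\pm1}$ of $S_H$ all have uniformly bounded word length with respect to $S=\{Q,t^{\pm1}\}$. The element $t$ is literally in $S$, so the content is: there is a constant $C$ such that every $f\in Q_H$ satisfies $|f|_S\le C$. First I would reduce $Q_H$ to $\widetilde{Q}_H$: any $f\in Q_H$ splits (component-wise, since addition in $R$ is the group operation in $A$) as $f=f_-+f_+$ where $f_-\in\widetilde{Q}_H$ and $f_+\in B_+$. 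Since $B_+\subset Q$ (we arranged this by Lemma \ref{atleast}), $f_+$ is a single generator, so $|f|_S\le |f_-|_S+1$, and it is enough to bound $|f_-|_S$ for $f_-\in\widetilde{Q}_H$.

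The heart of the argument is the preceding algorithm, whose output is exactly the inclusion $\widetilde{Q}_H\subset Q$: every element of $\widetilde{Q}_H$ lies in $Q$, hence is a single $S$-generator of length $1$. Therefore $|f_-|_S\le 1$ for every $f_-\in\widetilde{Q}_H$, giving $|f|_S\le 2$ for every $f\in Q_H$. Combined with $|t^{\pm1}|_S=1$ this yields $\sup_{x\in S_H}|x|_S\le 2<\infty$, hence $[S]\preccurlyeq [S_H]$. I would phrase this carefully: the algorithm was carried out without replacing $Q$ by an equivalent set (as emphasized in the text before the algorithm), so the conclusion $\widetilde{Q}_H\subset Q$ is a genuine containment for the original $Q$, and the bound on $S_H$-generators is immediate.

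Finally, combining with Lemma \ref{ineqone}, which gives $[S_H]\preccurlyeq[S]$, we get $[S]=[S_H]$, completing the proof of Proposition \ref{confzn}. I expect the only subtlety to be bookkeeping: making sure that the decomposition $f=f_-+f_+$ is valid as an identity in the group ring (it is, since $R=B_-\oplus_{\text{as sets at }t^0}B_+$ with overlap only the constant term, which one can absorb into either piece), and that the trivial-$H$ case was already dispatched in observation (1) preceding the algorithm so that we may genuinely assume $\widetilde{Q}_H$ has been built. No new ideas are needed beyond invoking the algorithm's output and Lemma \ref{ineqone}.
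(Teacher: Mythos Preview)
Your proposal is correct and follows essentially the same approach as the paper: decompose $q\in Q_H$ as $\widetilde q+b$ with $\widetilde q\in\widetilde Q_H$ and $b\in B_+$, use $\widetilde Q_H\subset Q$ (from the algorithm) and $B_+\subset Q$ (from Lemma~\ref{atleast}), and invoke Lemma~\ref{ineqone} for the reverse inequality. The only difference is cosmetic: the paper passes through condition~(c) of Definition~\ref{gennewconfine} to conclude $t^{n_0}q\in Q$ and hence $|q|_S\le n_0+1$, whereas you observe directly that a product of two elements of $Q\subset S$ has $S$-length at most $2$; your bound is sharper but the argument is the same.
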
 
\begin{proof} Let $q \in Q_H$. Then canonically, we have that $q = \widetilde{q} +b$, where $\widetilde{q} \in B_-, b \in B_+$. Since $q \in Q_H$, we must have $\widetilde{q} \in \widetilde{Q}_H$. By the arguments above and Lemma \ref{atleast}, $\widetilde{Q}_H, B_+ \subset Q$. So we get that $q \in Q +Q$. By using condition (c) of Definition \ref{gennewconfine}, $t^{n_0}q \in Q$. Hence $\op{sup}_{q \in Q_H} |q|_{S} \leq n_0 + 1 $, which implies that $[S] \preccurlyeq [S_H]$. By Lemma \ref{ineqone}, $[S] = [S_H]$. 
\end{proof}

Finally, observe that if $H=G$, then the structure $[S] = [S_H]$ is lineal. Thus if the action of $t$ is strictly confining $R$ into $Q$, then $H$ must be a proper subgroup of $G$.  This completes the proof of Proposition \ref{confzn}. 

We now turn our attention to the lineal structures on $L_n$. Our goal is to show that there is a unique lineal structure on $L_n$. 

\begin{lem}\label{tislox} Let $[X] \in \H(L_n)$ be any lineal structure. Then $[X] = [A \cup \{t\}]$.
\end{lem}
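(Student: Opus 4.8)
The plan is to show that any lineal structure $[X]$ on $L_n$ must coincide with $[A\cup\{t\}]$, which is the ``canonical'' lineal structure coming from the homomorphism $L_n\to\z$. First I would recall (Remark \ref{phomo}) that $L_n$ is amenable, so any lineal action of $L_n$ on a hyperbolic space has an associated Busemann quasi-character whose homogenization is a genuine homomorphism $L_n\to\mathbb R$; since $[L_n,L_n]\supseteq A$ and $L_n/A\cong\z$, every homomorphism $L_n\to\mathbb R$ factors through the standard epimorphism $\varepsilon\colon L_n\to\z$ (sending $a\mapsto 0$, $t\mapsto 1$) up to scaling. In particular the Busemann character of the action associated to $[X]$ is a nonzero multiple of $\varepsilon$, so $t$ acts loxodromically and every element of $A$ acts elliptically (each has finite order, hence bounded orbits under $\langle\cdot\rangle$, and by amenability/the lineal classification this forces bounded orbit displacement up to the quasi-character bound).

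Next I would establish the two comparisons. For $[A\cup\{t\}]\preccurlyeq [X]$: I must bound $|a|_X$ and $|t|_X$; $t$ is a single generator so $|t|_X=1$, and I need $\sup_{b\in A}|b|_X<\infty$. This is exactly the statement that $A$ is bounded in $\d_X$, which follows because $A\subseteq\ker p$ (where $p$ is the Busemann character of the $X$-action), and for a lineal action a subgroup on which the Busemann character vanishes has bounded orbits — essentially the same estimate used in Lemma \ref{lemma}: combining $|p(g)-q(g)|\le D$ with $q(g)\le \d_X(1,g)$ and the analogous lower bound, together with the fact that in a lineal action the orbit map to the ``line'' is a quasi-isometry, so $\d_X(1,g)$ is controlled by $|p(g)|$ up to an additive constant; since $p(b)=0$ for all $b\in A$ this gives a uniform bound. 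For the reverse comparison $[X]\preccurlyeq [A\cup\{t\}]$: I need $\sup_{x\in X}|x|_{A\cup\{t\}}<\infty$, i.e. every generator $x\in X$ has bounded length in the word metric $\d_{A\cup\{t\}}$. Writing $x=t^{m}b$ with $b\in A$, we have $|x|_{A\cup\{t\}}\le |b|_{A\cup\{t\}}+|t^m|_{A\cup\{t\}}\le 1+|m|=1+|\varepsilon(x)|$, so it suffices to bound $|\varepsilon(x)|$ over $x\in X$. But $\varepsilon$ is proportional to $p$ up to the defect $D$, and $p$ is bounded on the generating set $X$ (indeed $|p(x)|\le q(x)+D\le \d_X(1,x)+D = 1+D$), so $|\varepsilon(x)|$ is uniformly bounded; hence so is $|x|_{A\cup\{t\}}$.

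Combining the two inequalities yields $[X]=[A\cup\{t\}]$.

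I expect the main obstacle to be the first comparison $[A\cup\{t\}]\preccurlyeq[X]$, i.e.\ proving that $A$ has uniformly bounded orbits in \emph{every} lineal $X$-action — one has to be careful to get a bound uniform over all of $A$ (not just on each cyclic subgroup), which is where amenability and the precise form of the lineal classification (that the Busemann character detects boundedness, cf.\ \cite[Propositions 3.1, 3.2]{Amen} and the discussion around Lemma \ref{lemma}) are doing the real work; the rest is a bookkeeping exercise with the quasi-character defect inequality $|p(g)-q(g)|\le D$ and $q(g)\le\d_X(1,g)$.
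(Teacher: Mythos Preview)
Your strategy --- extract a Busemann character, identify it with a multiple of the epimorphism $\varepsilon\colon L_n\to\z$, and read off both comparisons directly --- is different from the paper's, which instead invokes \cite[Lemma~4.21]{ABO} (a commensurated subgroup containing no loxodromics acts elliptically) to get $A$ uniformly bounded in $\d_X$, then shows $t$ is loxodromic via the elliptic/loxodromic dichotomy on quasi-lines \cite[Corollary~3.6]{Man2}, and finally cites \cite[Corollary~6.7]{ah} for $[X]\preccurlyeq[A\cup\{t\}]$. Once one has $A$ bounded and $t$ loxodromic, your bookkeeping for the two comparisons is fine.

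There is one minor slip and one genuine gap. The slip: $[L_n,L_n]\supseteq A$ is false --- the abelianization of $L_n$ is $\z_n\times\z$, not $\z$. The conclusion you want (every homomorphism $L_n\to\mathbb R$ vanishes on $A$) is still correct, but the reason is simply that $A$ is a torsion group. The gap: the Busemann pseudocharacter of Definition~\ref{Bpc} is only defined when the group fixes a point of the boundary. A lineal action has exactly two limit points, and nothing you have written rules out that $L_n$ swaps them; Remark~\ref{phomo}, which you invoke, is about quasi-parabolic actions (where a fixed boundary point is part of the definition) and does not transfer as stated. Until orientability is established, $p$ is at best defined on an index-two subgroup, so your identification $p=c\varepsilon$ on all of $L_n$ --- and with it the uniform bound on $A$ and the loxodromicity of $t$ --- is not yet justified. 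The paper's route via the commensurated-subgroup lemma sidesteps orientation entirely; if you want to keep your approach, you need an additional argument that every lineal structure on $L_n$ is orientable before the Busemann machinery becomes available.
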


\begin{proof} The subgroup $A \leq L_n$ is a characteristic subgroup, and therefore commensurated. Further, $A$ contains no loxodromic elements. It follows from \cite[Lemma 4.21]{ABO} that the induced action of $A$ on $\Ga(L_n, X)$ must be elliptic. 

Now every $g \in L_n$ has the form $g =bt^k$ for some $b \in A, k \in \z$. If $t$ is not a loxodromic element, then $t$ must act elliptically (this dichotomy holds for groups acting along quasi-lines; see \cite[Corollary 3.6]{Man2}). But then no element of $L_n$ can act loxodromically, which is a contradiction. Thus $t$ is loxodromic and by \cite[Corollary 6.7]{ah}, we have $[X] \preccurlyeq [A \cup \{t\}]$. It follows easily that $[X] =[A \cup \{t\}]$.  
\end{proof}

\begin{proof}[Proof of Theorem \ref{inintro}(2)]  The Lamplighter groups have no general type actions.  By using the standard Ping -Pong Lemma, one can show that a group which admits a general type action on a hyperbolic space must contain $\mathbb{F}_2$ as a subgroup. However, the Lamplighter groups are solvable and thus cannot contain non-abelian free subgroups. 

By Proposition \ref{confzn}, the structures from $\mathbb{S}_G$ are the only quasi-parabolic structures on the lamplighter groups. As argued in the proof of Theorem \ref{inintro} (1), each of these quasi-parabolic structures dominates a common lineal structure. This lineal structure is unique by Lemma \ref{tislox}. The trivial structure is always unique and dominated by the lineal structure. Thus $\mathcal{B} = \H(L_n)$. 
\end{proof}

One might expect that $\H(\GwrZ) = \mathcal{B}(G)$ whenever $G$ is finite. However, we have a counter-example to this when $G =\z_2 \times \z_2$. The main obstruction in this case is that elements of the same order do not necessarily generate the same subgroup (unlike the case in $\z_n$) and this gives rise to quasi-parabolic subgroups that do not correspond to elements of $\mathbb{S}_G$. This counterexample also implies that the poset of quasi-parabolic structures on $\GwrZ$ can be very complicated even when $G$ is a very simple group to understand. 

\begin{ex}\label{noneqforfin} Let $G = \z_2 \times \z_2 = \{ (0,0) , (0,1), (1,0), (1,1) \}$. For $i \geq 2$, let $$p_i(t) = (0,1)t^{-i} + (1,0)t^{-i+1},$$ be elements in the group ring $R$. Set $B_+$ to be the collection of all elements from $R$ that have terms only with non-negative powers on $t$. Set \begin{center} $Q = \displaystyle \left\{ \sum_{k \hspace{3pt} finite} t^{j_k} p_{i_k}(t) + b(t) \mid k \geq 0, \hspace{2pt}, j_k \geq 0, \hspace{2pt} i_k \geq 2, \hspace{2pt} b(t) \in B_+ \right\}$ \end{center} 

Clearly, $Q \subset R$. It is understood that when $k =0$, the sum is an empty sum and thus we have elements from $B_+$. We first show that the action of $t$ is strictly confining $R$ into $Q$. It is clear that $tQ \subseteq Q$. To see the strict containment, consider the element $(1,0)t^0 = (1,0) \in B_+ \subset Q$. We will show that $(1,0)t^{-1} \notin Q$, which will imply that $(1,0) \notin tQ$ and hence $tQ \subset Q$. Assume, by contradiction, that $(1,0)t^{-1} \in Q$. Then we must have \begin{equation}\label{decomp} (1,0) t^{-1} =  \sum_{k \hspace{3pt} finite} t^{j_k} p_{i_k}(t) + b(t) \end{equation} for some choice of $k, j_k$'s and $p_{i_k}$'s. Observe that $k \neq 0$ since $(1,0)t^{-1} \notin B_+$. Also we must have $b(t) = 0$ since $(1,0)t^{-1} \in B_-$. 

Further $t^{j_k} p_{i_k}(t)  = t^{j_k} ( (0,1)t^{-i_k} + (1,0)t^{-i_k +1}) = (0,1)t^{-i_k + j_k} + (1,0)t^{-i_k + j_k +1}$.  Fix an $l \geq -1$. If for any $k$, $-i_k + j_k = l$, then the same must hold for an even number of $k$'s so that these terms cancel amongst themselves in (\ref{decomp}). Indeed, if this holds for an odd number of $k$'s, then the sum in (\ref{decomp}) will have terms from $B_+$, which is not possible. 

It thus suffices to consider the case when $-i_k + j_k \leq -2$ for all $k$. In this case,  $t^{j_k} p_{i_k}(t)  = p_{i_k - j_k}(t)$ for all $k$. If $i_k -j_k = i_k' -j_k'$ for distinct $k, k'$; then these elements will cancel each other since $O(0,1) = O(1,0) =2$. Thus we may assume that the $i_k -j_k$'s are all distinct. We may also re-arrange the terms so that if $k$ ranges over $\{1,2,...n\}$, then $i_1 -j_1 > i_2 - j_2 >...>i_n - j_n$. This reduces the sum in $(\ref{decomp})$ to $$(0,1)t^{i_1 - j_1} + ... + (1,0)t^{-i_n + j_n +1},$$ where the terms in the middle have co-efficients $(0,1), (1,0)$ or $(1,1)$. But such a sum has at least 2 non-zero terms and thus cannot equal $(1,0)t^{-1}$. 

Since $B_+ \subset R$, condition (b) of Definition \ref{gennewconfine} holds. It remains to show condition (c). To this end, it is easy to verify that $Q +Q \subseteq Q$. Hence, the action of $t$ is strictly confining $R$ into $Q$. 

Now let $X = \{Q, t^{\pm 1} \}$. We will show that $[X] \neq [S_H]$ for any $H < G$, which will prove the counter-example. Observe that $G =\z_2 \times  \z_2$ has the following proper subgroups: $$ \{(0,0)\} ; \{(0,0) , (0,1)\} ; \{(0,0) , (1,0) \} ; \{(0,0)  (1,1) \}$$

\textbf{Case a.} $H = \{(0,0)\}$. Then it is obvious that $|p_i(t)|_{S_H} \rightarrow +\infty$ as $i \rightarrow +\infty$. 

\textbf{Case b.} $H = \{(0,0) , (0,1)\}$. In this case, $Q_H$ cannot contain an element where a negative power of $t$ has the co-efficient $(1,0)$ since $O(0,1) =2$. Thus $|p_i(t)|_{S_H} \rightarrow +\infty$ as $i \rightarrow +\infty$. The cases when $H =\{(0,0) , (1,0) \}$ and $H =  \{(0,0)  (1,1) \}$ have similar arguments since $O(1,0 ) = O(1,1) =2$ also.

However, $|p_i(t)|_{X} = 1$ for every $i$. Thus $[X] \neq [S_H]$ for any $H < G$. 
\end{ex}

\end{document}